\newcommand{\Z}{\mathbb Z}
\newcommand{\comm}[1]{\marginpar{\tiny #1}}
\def\ds{\displaystyle}
\def\C{\mathbb C}
\def\R{\mathbb R}
\def\Q{\mathbb Q}
\def\Z{\mathbb Z}
\def\F{\mathbb F}
\DeclareMathOperator{\Li}{Li}
\newcommand{\leg}[2]{\left(\frac{#1}{#2}\right)}
\theoremstyle{theorem}
\newtheorem{thm}{Theorem}[section]
\newtheorem{lem}[thm]{Lemma}
\theoremstyle{remark}
\begin{document}

\title[Congruences for central binomial sums and finite polylogarithms]{Congruences for central binomial sums \\ and finite polylogarithms}

\author{Sandro Mattarei}
\email{\tt mattarei@science.unitn.it}
\address{Dipartimento di Matematica,% \\
Universit\`a di Trento, via Sommarive 14,
38123 Trento, Italy}

\author{Roberto Tauraso}
\email{tauraso@mat.uniroma2.it}
\address{Dipartimento di Matematica, % \\
Universit\`a di Roma ``Tor Vergata'', % \\
via della Ricerca Scientifica, %\\
00133 Roma, Italy}

\date{\today}
\maketitle
\begin{abstract}
We prove congruences, modulo a power of a prime $p$, for certain finite
sums involving central binomial coefficients $\binom{2k}{k}$,
partly motivated by analogies with the well-known power series
for $(\arcsin z)^2$ and $(\arcsin z)^4$.
The right-hand sides of those congruences involve
values of the finite polylogarithms
$\pounds_d(x)=\sum_{k=1}^{p-1} x^k/k^d$.
Exploiting the available functional equations for the latter
we compute those values, modulo the required powers of $p$,
in terms of familiar quantities such as Fermat quotients and Bernoulli numbers.
\end{abstract}

%%%%%%%%%%%%%%%%%%%%%%%%%%%%%%%%%%%%%%%%%%%%%%%%%%%%%%%%%%%%%%%%%%%%%%%%%%
\section{Introduction}\label{sec:intro}
%%%%%%%%%%%%%%%%%%%%%%%%%%%%%%%%%%%%%%%%%%%%%%%%%%%%%%%%%%%%%%%%%%%%%%%%%%

A well-known power series expansion of a familiar function where central binomial coefficients appear in the denominators is
\[
2\bigl(\arcsin(z/2)\bigr)^2
=\sum_{k=1}^{\infty} \frac{1}{k^2\binom{2k}{k}}\,z^{2k},
\]
which yields
$\sum_{k=1}^{\infty} k^{-2}\binom{2k}{k}^{-1}=\pi^2/18=\zeta(2)/3$
upon setting $z=1$.
By appropriate successive applications of differentiation one can derive explicit closed-form expressions
for the power series
$\sum_{k=1}^{\infty} k^{-d}\binom{2k}{k}^{-1}t^k$,
for any integer $d\le 2$,
and corresponding specializations to interesting values of $t$.
For $d>2$ the sum of that power series appears not to be expressible in terms of the simpler transcendental functions,
but explicit evaluations at special values of $t$ are still possible, such as
$\sum_{k=1}^{\infty} (-1)^k k^{-3}\binom{2k}{k}^{-1} =-2\,\zeta(3)/5$ and
$\sum_{k=1}^{\infty} k^{-4}\binom{2k}{k}^{-1}=17\,\zeta(4)/36$.
The former played a role in Apery's celebrated proof of irrationality of $\zeta(3)$,
see van der Poorten's account of Apery's proof~\cite{vdP:79} for a discussion of both formulas and relevant references.
Evaluation of the series for $d$ up to $8$ when $t=1$, and $d$ up to $9$ when $t=-1$, were found in~\cite{BCK:01},
exploiting special values of polylogarithms whose availability depends on {\em polylogarithm ladders}~\cite{Lew:91},
and hence, ultimately, on functional equations satisfied by the classical polylogarithms
$\Li_m(z)=\sum_{k=1}^{\infty}z^k/k^m$.

In a different direction, the power series expansions for $(\arcsin z)^m$
were determined in~\cite{BC:07}, extending on the known results for $m=1,\ldots,4$
(see~\cite[pp.~262--263]{Be:98}, for example).
Besides certain types of {\em multiple harmonic sums,} the coefficients involve
a central binomial coefficient in the numerator for $m$ odd, and in the denominator
for $n$ even.
Of special interest for us is the case $m=4$, which reads
\begin{equation}\label{eq:asH}
\frac{2}{3}\bigl(\arcsin(z/2)\bigr)^4
=
\sum_{k=1}^{\infty} \frac{H_{k-1}(2)}{k^2\binom{2k}{k}}\,z^{2k},
\end{equation}
where
$H_{k-1}(2)=\sum_{r=1}^{k-1}1/r^2$.
Again, differentiation produces similar closed-form expressions
for the sums of analogous power series with $k$ or $1$ in place of the factor $k^2$ at the denominator (as in~\cite{CZ:09}, for example).

Finite modular versions of familiar power series play a role in various parts of algebra and number theory,
where a power series is truncated at an appropriate place so that the remaining coefficients are $p$-integral,
thus obtaining a polynomial which can be evaluated modulo $p$.
Some of the functional properties of the sum of the infinite series may be preserved in that polynomial.
A distinguished algebraic example is the crucial role of the truncated exponential series $\sum_{k=0}^{p-1}x^k/k!$
in the theory of modular Lie algebras, as a tool for {\em toral switching}~\cite[Chapter~1]{Str:04}:
little is preserved of the functional equation $\exp{(x+y)}=\exp(y)\exp(y)$,
but just enough to make the algebraic application work, see~\cite{Mat:05} for an extension
of this point of view.
As an example from number theory we mention the use of the partial sum
$\sum_{k=1}^{p-1}x^k/k$ of the logarithmic series $-\log(1-x)$ made in~\cite{HB:02}:
there a polynomial argument about the partial sum is strongly motivated by transcendence arguments for the logarithmic function.
Generally speaking, when an infinite power series with rational coefficients admits an explicit summation formula
it is natural to seek for finite modular analogues, that is, for congruences modulo $p$ or a power of $p$
for an appropriate truncated version of the series, and see how those resemble the original function.

In this note we consider the sums of the first $p-1$ terms of some of the series mentioned earlier,
where $p$ is a prime, and evaluate them modulo certain powers of $p$.
Specifically, we obtain congruences for the polynomials
\begin{equation}\label{eq:MC}
p\sum_{k=1}^{p-1} \frac{t^k}{k^d\binom{2k}{k}}\pmod{p^3},
\quad\mbox{and}\quad
p\sum_{k=1}^{p-1} \frac{H_{k-1}(2)}{k^d\binom{2k}{k}}\,t^k \pmod{p},
\end{equation}
where $p$ is a prime and $d=0,1,2$
(and possibly $d=3,4$ as well, as we discuss at the end of this Introduction),
which we then specialize to particular values of $t$.
(Multiplication by $p$ is needed to make the resulting coefficients $p$-integral.)
Special cases of the second type of sum above were considered by Z.~W.~Sun in~\cite{ZWS:09}
together with related sums, for certain values of $t$,
and with attention to a comparison with the corresponding infinite sums.
As we explain in our Section~\ref{sec:ZWS}, our results include a few congruences first obtained in~\cite{ZWS:09}.
However, we produce many new ones in a systematic way, and provide a framework
to possibly obtain more.
As a test of the validity of this approach we prove several conjectures formulated by Z.~W.~Sun in~\cite{ZWS:10}.

A crucial observation is that, in analogy with the corresponding infinite sums,
explicit evaluation of the sums in Equation~\eqref{eq:MC} for specific values of $t$ depends on the availability
of special values of the {\em finite polylogaritms,} defined as
\[
\pounds_d(x)=\sum_{k=1}^{p-1} \frac{x^k}{k^d},
\]
where $d$ is a positive integer.
In turn, the possibility of computing those modulo small powers of $p$
is due to the existence of several known functional equations (in the shape of congruences)
satisfied by the finite polylogarithms, which we collect in Section~\ref{sec:polylog}.

It is fair to assume that much of this material on finite polylogarithms was known to Mirimanoff at the beginning
of the twentieth century.
In fact, two special functional equations (modulo $p$) relating $\pounds_1(x)^2$ and $\pounds_1(x)^3$
to values of $\pounds_2$ and $\pounds_3$, which were rediscovered in~\cite{Gr:04} and~\cite{DS:06},
were explicitly mentioned by Mirimanoff in~\cite[p.~61]{Mir:04}.
Because Mirimanoff omitted the proofs, and the proofs by algebraic manipulations given in~\cite{Gr:04} and~\cite{DS:06}
tend to hide how such equations might be discovered in the first place,
we devote Section~\ref{sec:polylog-proofs} to presenting our own proofs of those polynomial congruences.
The crux of our argument is that while the initial coefficients of $\pounds_1(x)^2$ and $\pounds_1(x)^3$
are easy to obtain as in the characteristic-zero case, invariance under a certain (rather illustrious) symmetry group of order six allows one to
recover all of the remaining coefficients.

For certain special values of $x$ the available functional equations for finite polylogarithms
taken together provide enough information
to determine $\pounds_d(x)$ modulo $p$, for $d=1,2,3$.
We present these evaluations in Section~\ref{sec:special_values}.

In Sections~\ref{sec:identities} and~\ref{sec:congruences}
we establish the necessary connection between the sums in Equation~\ref{eq:MC} and values of finite polylogarithms.
This does require some work, which we split into two parts and outline here.
The first part, in Section~\ref{sec:identities},
is to produce polynomial identities (that is, in characteristic zero)
which express our sums in Equation~\eqref{eq:MC} as more tractable sums
involving {\em Dickson polynomials}.
Because Dickson polynomials satisfy second-order linear recurrence relations,
certain sums involving them can be expressed in terms of finite polylogarithms.
However, bringing the sums of Section~\ref{sec:identities} to the required form
requires passing from polynomial identities to polynomial congruences, which we do in Section~\ref{sec:congruences}.

We devote Section~\ref{sec:numerators} to simpler-looking polynomials obtained
from those in Equation~\eqref{eq:MC} by switching the central
binomial coefficients from the denominators to the numerators.
Congruences for them cannot, generally speaking, be inferred from the corresponding ones
for the polynomials in Equation~\eqref{eq:MC}, but they can be
obtained by similar methods, and also involve values of the finite polylogarithms.

Our final Section~\ref{sec:ZWS} brings together the two main streams of
this paper, namely, the finite polylogarithms studied in
Sections~\ref{sec:polylog}, \ref{sec:polylog-proofs}
and~\ref{sec:special_values}, and the polynomial congruences
developed through Sections~\ref{sec:identities},
\ref{sec:congruences} and~\ref{sec:numerators}.
The polynomial congruences for the sums in Equation~\eqref{eq:MC}
and their analogues with the central binomial coefficients in the
numerators can be evaluated at the special values of $t$
for which we have computed the relevant finite polylogarithmic
values in Section~\ref{sec:special_values}.
Many numerical congruences can be obtained in this way, and we restrain ourselves
to display a selection of the most interesting ones, which include several conjectured by Z.~W.~Sun.

A few words are appropriate to comment on our restriction $d\le 2$ for the polynomials in Equation~\eqref{eq:MC}.
In principle our polynomial identities in Section~\ref{sec:identities} can be extended to higher values of $d$,
each case following from the previous one by appropriate integration.
In fact, the third identity in our Theorem~\ref{thm:recurrences} is for $d=3$, and then leads to the congruence in~Theorem~\ref{thm:congruences3}.
In Section~\ref{sec:ZWS} we apply the corresponding polynomial identity with $d=4$ without actually stating it;
one can find it quoted in~\cite{PP:11}.
However, it does not appear feasible to obtain pleasant numerical congruences from those polynomial identities
for higher values of $d$.

%%%%%%%%%%%%%%%%%%%%%%%%%%%%%%%%%%%%%%%%%%%%%%%%%%%%%%%%%%%%%%%%%%%%%%%%%%
\section{General congruences for $\pounds_d(x)$}\label{sec:polylog}
%%%%%%%%%%%%%%%%%%%%%%%%%%%%%%%%%%%%%%%%%%%%%%%%%%%%%%%%%%%%%%%%%%%%%%%%%%

In this section we collect some functional equations modulo a prime $p$
and other relations satisfied by the finite polylogarithms, especially $\pounds_1$, $\pounds_2$ and $\pounds_3$,
which we will use in the rest of the paper.
Some of them are related to functional equations satisfied by the classical polylogarithms (see~\cite{Lew:81});
a procedure for deducing them from the latter is described in~\cite{EG:02}.
The following most basic identities actually hold for all finite polylogarithms $\pounds_d$:
\begin{itemize}
\item
the inversion relation~\cite[Proposition~5.7(1)]{EG:02}, and its extension modulo $p^2$~\cite[Lemma~4.3]{ZHS:08b},
\begin{align}\label{C1}
&\pounds_d(x) \equiv (-1)^d x^p \pounds_d(1/x) \pmod{p},
\\
\label{C1b}
&\pounds_d(x) \equiv (-1)^d x^p \pounds_d(1/x) -dp\pounds_{d+1}(x) \pmod{p^2};
\end{align}
\item
the distribution relation~\cite[Proposition~5.7(2)]{EG:02},
\begin{equation}\label{C6}
\pounds_d(x^m)\equiv
m^{d-1}\sum_{k=0}^{m-1}\biggl(
\sum_{j=0}^{m-1} (\omega_m^kx)^{pj}\biggr)\pounds_d(\omega_m^k x) \pmod{p},\quad\mbox{where $\omega_m=e^{2\pi i/m}$};
\end{equation}
of course this congruence takes place in the ring of integers of the cyclotomic field $\Q(\omega)$.
\end{itemize}

Next, there are several relations which are specific to $\pounds_1$, $\pounds_2$ and $\pounds_3$.
Some of them involve the quantities
\[
q_p(x)=\frac{x^{p-1}-1}{p}\quad\mbox{and}\quad
Q_p(x)=\frac{x^{p}+(1-x)^p-1}{p}=xq_p(x)+(1-x)q_p(1-x),
\]
and some require $p>3$, which we assume from now on for simplicity.
They are as follows:
\begin{itemize}
\item
the 3-term relation for $\pounds_2$~\cite[Proposition~5.11]{EG:02}, rediscovered in~\cite[Equation~(5)]{Gr:04},
\begin{equation}\label{C3}
\pounds_2(x) \equiv \pounds_2(1-x)+x^p\pounds_2(1-1/x) \pmod{p};
\end{equation}
\item
a congruence noted by Granville~\cite[Equation~(6)]{Gr:04},
\begin{equation}\label{C2}
Q_p(x) \equiv -\pounds_1(1-x)-p\pounds_2(x) \pmod{p^2};
\end{equation}
\item
another congruence rediscovered by Granville~\cite[Equation~(5)]{Gr:04}, but see~\cite[p.~61]{Mir:04},
\begin{equation}\label{C4}
\frac{1}{2}Q_p^2(x) \equiv -x^p\pounds_2(x)-(1-x^p)\pounds_2(1-x) \pmod{p};
\end{equation}
\item
a congruence rediscovered by Dilcher and Skula~\cite[Theorem~2]{DS:06}, but see~\cite[p.~61]{Mir:04},
\begin{equation}\label{C5}
\begin{split}
\frac{1}{6}Q_p^3(x) &\equiv -x^p\pounds_3(x)-(1-x^p)\pounds_3(1-x)-x^{2p}(1-x^p)\pounds_3(1-1/x) \\
&\quad-\frac{2}{3}x^p(1-x^p)\pounds_3(-1) \pmod{p}.
\end{split}
\end{equation}
\end{itemize}

We will also need a special case of the following congruence, obtained by the authors in~\cite[Lemma~3.2]{MT:10}: for $p>d+1$
\begin{equation}\label{polylogMT}
\sum_{0<k_1<k_2<\cdots<k_d<p}\frac{x^{k_d}}{k_1 k_2\cdots k_d}
\equiv (-1)^{d-1}\pounds_d(1-x)\pmod{p}.
\end{equation}

We mention for completeness that the easy congruence~\eqref{C1b}
can be extended as follows modulo arbitrary powers of $p$,
\begin{equation}\label{C1c}
(-1)^d x^p \pounds_d(1/x)
=\sum_{m=0}^{\infty}\binom{d+m-1}{m}p^m\pounds_{d+m}(x),
\end{equation}
to be interpreted in the power series ring $\Z_p[[x]]$.

\begin{proof}[Proof of Equation~\eqref{C1c}]
We have
\begin{align*}
(-1)^d x^p \pounds_d(1/x)
&=(-1)^d\sum_{k=1}^{p-1}\frac{x^{p-k}}{k^d}
=(-1)^d\sum_{k=1}^{p-1}\frac{x^k}{(p-k)^d}
\\&=
\sum_{k=1}^{p-1}\frac{x^k}{k^d}\left(1-\frac{p}{k}\right)^{-d}
\\&=
\sum_{k=1}^{p-1}\frac{x^k}{k^d}\sum_{m=0}^{\infty}\binom{d+m-1}{d}(p/k)^m
\\&=
\sum_{m=0}^{\infty}\binom{d+m-1}{m}p^m\pounds_{d+m}(x),
\end{align*}
as desired.
\end{proof}

%%%%%%%%%%%%%%%%%%%%%%%%%%%%%%%%%%%%%%%%%%%%%%%%%%%%%%%%%%%%%%%%%%%%%%%%%%
\section{New proofs of congruences~\eqref{C3}, \eqref{C4} and~\eqref{C5}}\label{sec:polylog-proofs}
%%%%%%%%%%%%%%%%%%%%%%%%%%%%%%%%%%%%%%%%%%%%%%%%%%%%%%%%%%%%%%%%%%%%%%%%%%

The proofs of Equations~\eqref{C3}, \eqref{C2} and~\eqref{C4} given in~\cite{Gr:04},
and that of Equation~\eqref{C5} in~\cite{DS:06}, were obtained by algebraic manipulations after differentiation of both sides.
An undesirable feature of such proofs is that one is required to guess the desired congruence in the first place.
We present proofs of Equations~\eqref{C3}, \eqref{C4} and~\eqref{C5}
which do not suffer from this imperfection.

Because of the congruence
\begin{equation}\label{eq:Q}
Q_p(x)\equiv -\pounds_1(x)\pmod{p},
\end{equation}
which plainly follows from the definition of $Q(x)$ by expanding $(1-x)^p$ and using the fact that
$\binom{p}{k}=\frac{p}{k}\binom{p-1}{k-1}\equiv(-1)^{k-1}/p\pmod{p^2}$ for $0<k<p$,
Equations~\eqref{C4} and~\eqref{C5} are seen to be equivalent to the second and third of the following set of three congruences:
\begin{align}
\label{eq:L^1}
\pounds_1(x)
&\equiv
\pounds_1(1-x)\pmod{p},
\\\label{eq:L^2}
\pounds_1(x)^2/2
&\equiv -x^p\pounds_2(x)-(1-x^p)\pounds_2(1-x) \pmod{p},
\\\label{eq:L^3}
\pounds_1(x)^3/6
&\equiv
x^p\pounds_3(x)+(1-x^p)\pounds_3(1-x)+x^{2p}(1-x^p)\pounds_3(1-1/x)
\\&\notag
\quad
+(2/3)x^p(1-x^p)\pounds_3(-1) \pmod{p};
\end{align}
the second congruence clearly requires $p>2$, and the third one $p>3$.
The first of these three congruences follows from Equation~\eqref{eq:Q} and the obvious invariance of $Q_p(x)$
under the substitution $x\mapsto 1-x$.
The remaining two were already known to Mirimanoff~\cite[p.~61]{Mir:04}, as we pointed out in the Introduction.
Note that the terms of degree less than $p$ in the right-hand sides of the three congruences
are given by $\pounds_1(1-x)$, $-\pounds_2(1-x)$ and $\pounds_3(1-x)$.
It is easy to see that these terms match the corresponding terms in the left-hand sides.
In fact, this statement appropriately extends to powers $\pounds_1(x)^d$ for arbitrary $d$, as we show in
Lemma~\ref{lem:pounds-powers} below, including some extra terms as well.
It follows that congruences~\eqref{eq:L^2} and~\eqref{eq:L^3}
are verified up to and including the term of degree $p$.
Then we will recover all the remaining terms in the right-hand sides of Equations~\eqref{eq:L^2} and~\eqref{eq:L^3},
and thus complete their proofs,
by invariance under a group of transformations of order six, generated by the symmetry expressed by Equation~\eqref{eq:L^1}
together with the other obvious symmetry
$\pounds_1(x) \equiv -x^p \pounds_1(1/x) \pmod{p}$,
which is a special case of Equation~\eqref{C1}.
In case of Equation~\eqref{eq:L^2}, where only about half the coefficients need to be recovered,
the argument yields a proof of Equation~\eqref{C3} as a by-product.
The group of transformations of order six has a long history, being omnipresent in the investigations
on Fermat's last theorem, see~\cite{Rib:79}, and it is a fair guess that Mirimanoff's own proofs of
congruences~\eqref{eq:L^2} and~\eqref{eq:L^3} might have had much in common with ours.

Because $\pounds_1(x)\equiv -\log(1-x)\pmod{x^p}$,
we start the ball rolling by studying the coefficients in the powers of the ordinary logarithmic series.

\begin{lem}\label{lem:log-powers}
For any nonnegative integers $d,k$, the coefficient of $x^k$ in the power series
\[
\log^d(1+x)/d!\in\Q[[x]]
\]
equals the coefficient of $y^d$ in the polynomial
\[
\binom{y}{k}=y(y-1)\cdots(y-k+1)/k!\in\Q[y].
\]
\end{lem}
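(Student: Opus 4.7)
The plan is to exploit the well-known two-variable generating function identity
\[
(1+x)^y = \exp\bigl(y\log(1+x)\bigr),
\]
which holds as a formal identity in $\Q[y][[x]]$, to extract both sides at once.

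First I would expand the right-hand side as a power series in $y$, getting
\[
(1+x)^y = \sum_{d=0}^{\infty}\frac{\log^d(1+x)}{d!}\,y^d,
\]
so that the coefficient of $x^k y^d$ equals the coefficient of $x^k$ in $\log^d(1+x)/d!$. Then I would expand the left-hand side as a power series in $x$ using the generalized binomial theorem,
\[
(1+x)^y = \sum_{k=0}^{\infty}\binom{y}{k}x^k,
\]
so that the coefficient of $x^k y^d$ equals the coefficient of $y^d$ in $\binom{y}{k}$. Comparing the two extractions of the same coefficient yields the claim.

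The main thing to check is that this purely formal manipulation is rigorous: one must verify that $(1+x)^y$ makes sense as an element of $\Q[y][[x]]$ (or $\Q[[x,y]]$), that the binomial expansion in $x$ is its unique presentation as a power series in $x$, and that the exponential expansion in $y$ is legitimate, which amounts to the observation that $\log(1+x)\in x\Q[[x]]$ has no constant term, so its powers $\log^d(1+x)/d!$ begin in degree $\ge d$ and the sum over $d$ is finite in each fixed $x$-degree. These are all standard and pose no real obstacle; the main content of the lemma is simply the observation that the identity $\exp(y\log(1+x))=(1+x)^y$ provides a bivariate generating function whose two natural series expansions encode exactly the two families of numbers being compared.
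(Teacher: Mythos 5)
Your proposal is correct and follows essentially the same route as the paper: both rest on expanding the identity $(1+x)^y=\exp\bigl(y\log(1+x)\bigr)$ once in powers of $y$ and once in powers of $x$ and comparing coefficients of $x^ky^d$. The only (immaterial) difference is that you justify the identity purely formally in $\Q[[x,y]]$, while the paper passes through analytic convergence near the origin before concluding the formal identity.
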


\begin{proof}
The identity
\[
\exp(y\log(1+x))=(1+x)^y
\]
yields
\[
\sum_{d=0}^{\infty}(y\log(1+x))^d/d!=
\sum_{k=0}^{\infty}\binom{y}{k}x^k,
\]
with both series converging for $(x,y)$ in a suitable neighbourhood of the origin in $\R^2$ (or $\C^2$).
Hence the latter identity holds in the ring of formal power series $\Q[[x,y]]$,
and the conclusion follows.
\end{proof}

Our usage of polynomial congruences with respect to a double modulus $(x^m,p)$ will be to give
precedence to the modulus $x^m$ over the modulus $p$, in the sense that
we interpret them as congruences modulo $p$ after all terms of degree $m$ or higher have been discarded
(regardless of their coefficients).

\begin{lem}\label{lem:pounds-powers}
For $1<d<p-1$ we have
\[
\pounds_1(x)^d/d!\equiv(-1)^{d-1}\pounds_d(1-x)
+(-1)^d\frac{B_{p-d}}{d}x^p
\pmod{(x^{p+1},p)},
\]
where
$B_{p-d}$ denotes a Bernoulli number.
\end{lem}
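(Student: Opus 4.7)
The plan is to compare coefficients of $x^n$ on both sides for $0\le n\le p$, modulo $p$.

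\emph{Coefficients up to $x^{p-1}$.} The first observation is that $\pounds_1(x)^d$ and $(-\log(1-x))^d$ coincide modulo $x^{p+1}$: any composition $k_1+\cdots+k_d=n$ with $n\le p$ and $k_i\ge 1$ automatically has every $k_i\le p-1$ because $d\ge 2$. Lemma~\ref{lem:log-powers} applied with $x\mapsto -x$ then yields
\[
[x^n]\,\pounds_1(x)^d/d! \;=\; (-1)^{n+d}[y^d]\binom{y}{n} \;=\; \frac{c(n,d)}{n!},
\]
where $c(n,d)$ is the unsigned Stirling number of the first kind. Using $c(n,d)=e_{n-d}(1,2,\ldots,n-1)$ together with the reciprocal identity $e_k(1/a_1,\ldots,1/a_m)=e_{m-k}(a_1,\ldots,a_m)/\prod a_i$, this simplifies to $\frac{1}{n}\,e_{d-1}(1,\tfrac12,\ldots,\tfrac1{n-1})$. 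For $1\le n\le p-1$ the congruence~\eqref{polylogMT} (applicable since $p>d+1$) expresses $[x^n](-1)^{d-1}\pounds_d(1-x)\pmod p$ as the same elementary symmetric function, so the two sides of the lemma agree modulo $p$ through degree $p-1$.

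\emph{The coefficient of $x^p$.} Write $p_r=H_{p-1}^{(r)}$. The remaining task is to show
\[
\frac{c(p,d)}{p!}=\frac{1}{p}\,e_{d-1}\!\left(1,\tfrac12,\ldots,\tfrac1{p-1}\right)\equiv \frac{(-1)^d B_{p-d}}{d}\pmod p.
\]
I first note that $p_r\equiv 0\pmod p$ for $1\le r\le p-2$: this is immediate from pairing $k\leftrightarrow p-k$ when $r$ is odd, and from Faulhaber's formula $\sum_{k=1}^{p-1}k^{p-1-r}\equiv pB_{p-1-r}\pmod{p^2}$ (combined with $k^{-r}\equiv k^{p-1-r}\pmod p$) when $r$ is even. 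An easy induction on $r$ via Newton's identity $re_r=\sum_{i=1}^r(-1)^{i-1}e_{r-i}p_i$ then gives $e_r\equiv 0\pmod p$ for the same range. Knowing this, every cross term with $1\le i\le r-1$ is divisible by $p^2$, and Newton's identity collapses modulo $p^2$ to
\[
(d-1)e_{d-1}\equiv (-1)^d H_{p-1}^{(d-1)}\pmod{p^2}.
\]
Inserting the classical Glaisher congruence $H_{p-1}^{(r)}\equiv \frac{rp}{r+1}B_{p-1-r}\pmod{p^2}$ (valid for $1\le r\le p-3$, which covers $r=d-1$ since $2\le d\le p-2$) and dividing by $p(d-1)$ produces the claimed right-hand side.

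The most delicate ingredient is the appeal to Glaisher's mod-$p^2$ congruence. This is classical, but if one wishes to keep the argument self-contained it can be derived from Faulhaber's formula via the expansion $k^{-r}\equiv k^{p-1-r}(1-pq_p(k))\pmod{p^2}$ followed by a symmetrization $k\leftrightarrow p-k$ to handle the resulting Fermat-quotient sums.
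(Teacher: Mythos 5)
Your proof is correct, and it shares the paper's skeleton --- compute the coefficients of $\pounds_1(x)^d/d!$ via Lemma~\ref{lem:log-powers}, match them against the right-hand side through degree $p-1$, and handle the coefficient of $x^p$ separately --- but both halves are carried out by genuinely different means. For the coefficients of degree below $p$, the paper expands $\pounds_d(1-x)=\sum_k\bigl(\sum_r r^{-d}\binom{r}{k}\bigr)(-x)^k$ and identifies each inner sum with $-[y^d]\binom{y}{k}$ modulo $p$ using the orthogonality $\sum_{r=1}^{p-1}r^h\equiv 0$ or $-1\pmod p$; you instead rewrite $[y^d]\binom{y}{n}$ as the multiple harmonic sum $\frac{1}{n}\sum_{0<j_1<\cdots<j_{d-1}<n}(j_1\cdots j_{d-1})^{-1}$ via Stirling-number and elementary-symmetric-function identities and then invoke the already-quoted congruence~\eqref{polylogMT}. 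That is a legitimate and rather transparent shortcut, though it outsources the orthogonality computation to the proof of~\eqref{polylogMT} in the authors' earlier paper; the range condition $p>d+1$ needed there is guaranteed by the hypothesis $d<p-1$. For the $x^p$ coefficient the paper simply cites Zhou--Cai for $h_{d-1}\equiv\frac{(-1)^{d}}{d}\,pB_{p-d}\pmod{p^2}$, whereas you reprove exactly this congruence from scratch: Newton's identities with $e_r\equiv p_r\equiv 0\pmod p$ kill the cross terms modulo $p^2$, leaving $(d-1)e_{d-1}\equiv(-1)^dH_{p-1}(d-1)\pmod{p^2}$, and Glaisher's evaluation of $H_{p-1}(d-1)$ modulo $p^2$ (applicable since $d-1\le p-3$) finishes the computation. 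This makes your argument more self-contained at the cost of leaning on Glaisher instead of Zhou--Cai; the signs and ranges all check out, so I have no corrections.
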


\begin{proof}
The terms of degree less than $p$ in the polynomial $\pounds_1(x)$
coincide with the corresponding terms of the power series $-\log(1-x)$.
Because there is no constant term,
Lemma~\ref{lem:log-powers} implies that the coefficient of $x^k$ in the polynomial $\pounds_1(x)^d/d!$,
for $k<p+d-1$, equals $(-1)^{d+k}$ times the coefficient of $y^d$ in the polynomial
$\binom{y}{k}\in\Q[y]$.
In particular, this holds for $k\le p$, which is all we need here.

As for the the first term at the right-hand side of the congruence, we have
\[
\pounds_d(1-x)=\sum_{r=1}^{p-1}\frac{(1-x)^r}{r^d}
=
\sum_{r=1}^{p-1}\frac{1}{r^d}\sum_{k=0}^{p-1}\binom{r}{k}(-x)^k
=
\sum_{k=0}^{p-1}\Biggl(\sum_{r=1}^{p-1}\frac{1}{r^d}\binom{r}{k}\Biggr)(-x)^k.
\]
Because $\sum_{r=1}^{p-1}r^h\equiv -1\pmod{p}$ if $p-1\mid h$, and $\equiv 0\pmod{p}$ otherwise,
after expanding $\binom{r}{k}$ as a polynomial in $r$ we see that the sum
\[
\sum_{r=1}^{p-1}\frac{1}{r^d}\binom{r}{k}
=
\sum_{r=1}^{p-1}\frac{1}{r^d}(a_kr^k+\cdots+a_1r+a_0)
\]
is congruent, modulo $p$, to the opposite of the coefficient of $y^d$ in the polynomial
$\binom{y}{k}\in\Q[y]$, for $k<p$.
This proves that the stated congruence holds modulo $(x^p,p)$.

We deal with the coefficient of $x^p$ noting that
\[
\binom{y}{p}=\frac{y}{p}\prod_{j=1}^{p-1}\left(1-\frac{y}{j}\right)
=\frac{y}{p}\sum_{r=0}^{p-1} h_{r}(-y)^r,
\]
where
\[
h_r=\sum_{0<j_1<j_2<\cdots<j_{r}<p}\frac{1}{j_1 j_2\cdots j_r}.
\]
According to~\cite{ZC:07}, for $1\leq r\leq p-3$ we have
\[
h_r\equiv \frac{(-1)^{r-1}}{r+1}\,pB_{p-r-1}\pmod{p^2},
\]
which completes the proof.
\end{proof}

The congruence in Lemma~\ref{lem:pounds-powers} traces back to Mirimanoff.
With little extra effort the above proof extends it to a congruence
modulo $(x^{p+d-1},p)$ involving Stirling numbers of the first kind besides Bernoulli numbers.

According to Lemma~\ref{lem:pounds-powers} the term of degree $p$ in $\pounds_d(x)^d$ vanishes modulo $p$ when $d$ is even.
An alternative route to this conclusion is noting that the polynomial $\binom{y}{p}-(y^p-y)/p$ has $p$-integral coefficients
and that its reduction modulo $p$ is an odd polynomial in $\F_p[y]$,
which is easy to check by evaluating it on $y=0,1,\ldots, p-1$.

Lemma~\ref{lem:pounds-powers} tells us that congruence~\eqref{eq:L^2} is correct
as far as we look only at the terms of degree up to $p$.
To complete the proof we now use the available symmetries.

\begin{proof}[Proof of Equations~\eqref{C3} and~\eqref{eq:L^2}]
According to Equation~\eqref{C1} we have
$\pounds_1(x)^2\equiv x^{2p}\pounds_1(1/x)^2\pmod{p}$.
This means that the coefficients of $x^k$ and $x^{2p-k}$ in $\pounds_1(x)^2$ are equivalent modulo $p$, for all $k$.
But the values modulo $p$ of the lower half of the coefficients are given in Lemma~\ref{lem:pounds-powers}, namely,
$\pounds_1(x)^2/2\equiv-\pounds_2(1-x)\pmod{(x^{p+1},p)}$.
Hence this determines the upper half of the coefficients as well, and so we have
\begin{equation}\label{eq:G_variant}
\pounds_1(x)^2/2\equiv -\pounds_2(1-x)-x^{2p}\pounds_2(1-1/x) \pmod{p}.
\end{equation}
Because the left-hand side is invariant, modulo $p$, under the substitution $x\mapsto 1-x$, so
must be the right-hand side, and hence
\[
-\pounds_2(1-x)-x^{2p}\pounds_2(1-1/x)
\equiv
-\pounds_2(x)-(1-x)^{2p}\pounds_2(x/(x-1))
\pmod{p}.
\]
Using $\pounds_2(y)=y^p\pounds_2(1/y)$ with $y=x/(x-1)$, and rearranging terms, we obtain Equation~\eqref{C3}.
Substituting it appropriately in Equation~\eqref{eq:G_variant} yields  Equation~\eqref{eq:L^2}.
\end{proof}

We will follow a similar approach to prove Equation~\eqref{eq:L^3}.
However, in this case Lemma~\ref{lem:pounds-powers} provides us with just about one third of the values modulo $p$
of the coefficients of $\pounds_1(x)^3$, and so we need a more careful application of symmetries to recover
the rest of the coefficients.
For this reason we take some time to discuss the group of symmetries in some detail,
and the polynomials which it leaves invariant.

If $F$ is any field, the involutive transformations $R:z\mapsto 1/z$ and $S:z\mapsto 1-z$
of the projective line $F\cup\{\infty\}$ generate a group
\begin{equation}\label{eq:G}
G=\left\{1,R,S,RS,SR,RSR\right\}
\end{equation}
of order six, which is isomorphic to the symmetric group on three objects (with $1$ denoting the identity map).
Thus, writing composition of maps from right to left, the group also contains the two elements
$RS:z\mapsto 1/(1-z)$
and
$SR:z\mapsto 1-1/z$,
which have order three and are inverse of each other,
and a third involution
$RSR=SRS:z\mapsto z/(z-1)$.
As is well known, $R^2=1$, $S^2=1$ and $RSR=SRS$ are a set of defining relations for $G$ as a group generated by $R$ and $S$.

The group $G$ plays a crucial role in virtually all of this paper.
By considering the fixed points of the various elements of $G$ in the action it is easy to see that
all orbits of $G$ on $F\cup\{\infty\}$ have length six, with the only exceptions of the orbits
$\{1,0,\infty\}$ and $\{-1,2,1/2\}$ of length three
(but they coincide if $F$ has characteristic two, and the latter orbit has length one
if $F$ has characteristic three) and,
possibly, an orbit $\{\omega_6,\omega_6^{-1}\}$ of length two (or one if $F$ has characteristic three).
This last orbit exists when $F$ contains a root $\omega_6$
of the polynomial $x^2-x+1$ (which for the finite field $F=\F_q$ is the case if and only if $q\equiv 0,1\pmod{3}$).

This action of $G$ on the projective line $F\cup\{\infty\}$ naturally induces an action
on its function field as an algebraic variety.
A formal treatment would require dealing with homogeneous polynomials and then rational functions in two indeterminates
$x_0$ and $x_1$, but to avoid losing sight of the main argument we prefer to use the affine parameter $x=x_1/x_0$ for the projective line,
at the expense of adding some {\em ad-hoc} terminology concerning the point $\infty$.
(A few comments on the more formal point of view will be added in parentheses for the more algebraically-inclined reader.)

We define a linear representation of $G$ on $F[x]_{\le m}$, the space of polynomials of degree not exceeding $m$, by setting
\[
(Rf)(x):=(-x)^mf(1/x),
\qquad\text{and}\qquad
(Sf)(x):=f(1-x),
\]
for $f\in\F_p[x]_{\le m}$.
That this stipulation really defines a representation of $G$ can be verified by checking that $R(Rf)=f$, $S(Sf)=f$, and
$R(S(Rf)))=S(R(Sf))$.
One finds
\begin{align*}
(RSf)(x)&=(-x)^mf(1-1/x),
\\
(SRf)(x)&=(x-1)^mf\bigl(1/(1-x)\bigr),
\\
(RSRf)(x)&=(x-1)^mf\bigl(x/(x-1)\bigr)=(SRSf)(x).
\end{align*}
(In terms of homogeneous coordinates $(x_0,x_1)$ with $x=x_1/x_0$,
we would obtain this representation of $G$ on $F[x_0,x_1]$ by letting $G$ act on a linear form $f(x_0,x_1)=x_0f(1,x_1/x_0)$ by
$(Rf)(x_0,x_1):=f(-x_1,-x_0)$ and
$(Sf)(x_0,x_1):=f(x_0,x_0-x_1)$.)

Given a polynomial $f\in F[x]$, we may assign to it a {\em formal degree} $m$, any integer no less than the ordinary degree $\deg f$,
to indicate that we are viewing $f$ as an element of $F[x]_{\le m}$ and elements of the group $G$ should act on it as described above.
(Thus, the same polynomial can be assigned different formal degrees.)
Then the action above is compatible with polynomial multiplication, in the sense that
if $f_1$ and $f_2$ are polynomials of formal degrees $m_1$ and $m_2$, and we assign formal degree $m_1+m_2$ to their product $f_1f_2$, then
$T(f_1f_2)=(Tf_1)(Tf_2)$ for any $T\in G$.
(This artifice makes up for not using homogeneous coordinates, and a polynomial of formal degree $m$ really corresponds to a polynomial
function of degree $m$ on the projective line.)
If we agree to say that a polynomial $f$ of formal degree $m$ has the point $\infty$ as a zero with multiplicity $m-\deg f$,
then the sum of the multiplicities of the roots of $f$, including that of $\infty$,
does not exceed its formal degree $m$, unless $f$ is the zero polynomial.

If the field $F$ has characteristic greater than $3$, as we assume from now on, it is a basic fact that the map
$f\mapsto(1/|G|)\sum_{T\in G}Tf$ projects $F[x]_{\le m}$,
the space of polynomials of formal degree $m$,
onto its subspace of $G$-invariant polynomials.
(This is the projection operator used in the standard proof of Maschke's theorem in the basic
representation theory of finite groups, for example.)
Thus, any $G$-invariant polynomial of formal degree $m$ can be expressed as
$f+Rf+Sf+RSf+SRf+RSRf$
for some $f\in F[x]_{\le m}$.
More conveniently for us,
any $G$-invariant polynomial of formal degree $m$ can be expressed as
\begin{equation}\label{eq:G-invariant}
(f+Sf+RSf)(x)=f(x)+f(1-x)+(-x)^m f(1-1/x)
\end{equation}
for some $f\in F[x]_{\le m}$ with the extra property that $Rf=f$.

We are now getting close to a proof of congruence~\eqref{eq:L^3}.
The fact that most orbits of $G$ on $F\cup\{\infty\}$ have length six
implies that, roughly speaking, all the coefficients of a $G$-invariant polynomial $f$ of formal degree $m$
can be recovered from knowledge of only about $m/6$ coefficients, if carefully selected.
Of course we will need to specify a larger number of coefficients if our selection encodes redundant information.
The following lemma shows that the lower third of the coefficient list is a sufficiently large selection to this purpose.

\begin{lem}\label{lem:unique}
Let $f$ be a $G$-invariant polynomial of formal degree $m$.
If $f$ has no terms of degree $\le m/3$, then $f$ is the zero polynomial.
\end{lem}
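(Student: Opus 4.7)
The plan is to convert the vanishing of the lower third of the coefficients of $f$ into a vanishing-order statement at the point $0$, and then use the $G$-invariance to propagate this vanishing order to the whole orbit of $0$ under $G$, so as to violate the bound on the total multiplicity of the roots of a nonzero polynomial of formal degree $m$.

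First I would identify the orbit of $0$ under $G$. Since $S(0)=1$ and $R(0)=\infty$, this orbit is the exceptional orbit $\{0,1,\infty\}$ of length three, one of the two orbits of length less than six described in the discussion preceding the lemma.

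Next I would restate the hypothesis in terms of multiplicities. Suppose $f\neq 0$, and let $\mu$ denote the multiplicity of $0$ as a root of $f$, namely the smallest index $k$ for which the coefficient of $x^k$ in $f$ is nonzero. The assumption that $f$ has no terms of degree $\le m/3$ is exactly the statement that $\mu>m/3$.

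The central step is to use the $G$-invariance of $f$ to conclude that $1$ and $\infty$ are also roots of multiplicity at least $\mu$. Since $(Sf)(x)=f(1-x)$, the invariance $f=Sf$ gives $f(x)=f(1-x)$, whence the multiplicity of $f$ at $x=1$ equals that at $x=0$, namely $\mu$. For the point at infinity, $R$-invariance means $f(x)=(-x)^m f(1/x)$; writing $f(x)=\sum_{k=\mu}^{n}a_k x^k$ with $a_\mu\neq 0$ and $n=\deg f$, the relation forces $n=m-\mu$, i.e.\ the multiplicity of $f$ at $\infty$ (defined as $m-\deg f$) equals $\mu$.

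Finally, summing multiplicities gives at least $3\mu$ roots counted with multiplicity, including the point $\infty$. By the convention recalled in the paragraph just before the lemma, any nonzero polynomial of formal degree $m$ has at most $m$ roots counted this way, so $3\mu\le m$, contradicting $\mu>m/3$. Hence $f=0$.

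The only potential subtlety is checking that the formal-degree bookkeeping for the action of $R$ really does make $R$ swap multiplicities at $0$ and $\infty$; this is routine from the explicit formula for $Rf$, and the rest of the argument is just combining the orbit length $3$ with the assumed vanishing ratio $1/3$.
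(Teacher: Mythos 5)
Your proof is correct and follows essentially the same route as the paper's: identify the orbit $\{0,1,\infty\}$ of $0$, use $G$-invariance to transfer the multiplicity $>m/3$ from $0$ to $1$ and $\infty$, and conclude by comparing the total multiplicity with the formal degree $m$. You merely spell out the verification at $\infty$ (via $f=Rf$) that the paper leaves implicit.
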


\begin{proof}
By hypothesis $0$ is a root of $f$ with multiplicity strictly higher than $m/3$.
Recall that the $G$-orbit of $0$ is $\{0,1,\infty\}$.
Invariance under $G$ implies that $1$ and $\infty$ are also roots of $f$, each with multiplicity exceeding $m/3$.
It follows that $f$ is the zero polynomial.
\end{proof}

\begin{lem}\label{lem:3p}
Let $f$ be a polynomial with $\deg f<p$, over a field of characteristic $p>3$,
satisfying $f(x)=-x^pf(1/x)$.
Then there is a unique $G$-invariant polynomial of formal degree $3p$ such that
$g(x)\equiv f(1-x)\pmod{x^{p+1}}$, and is given by
\[
g(x)=x^pf(x)+(1-x^p)f(1-x)+x^{2p}(1-x^p)f(1-1/x).
\]
\end{lem}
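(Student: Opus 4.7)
The plan is to separate uniqueness from existence, then verify the two required properties (\emph{$G$-invariance} and the congruence $g(x)\equiv f(1-x)\pmod{x^{p+1}}$) for the explicit formula.

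For uniqueness, suppose $g_1$ and $g_2$ are two $G$-invariant polynomials of formal degree $3p$ both congruent to $f(1-x)$ modulo $x^{p+1}$. Then $g_1-g_2$ is $G$-invariant of formal degree $3p$ with no terms of degree $\le p=3p/3$, so it vanishes by Lemma~\ref{lem:unique}. This disposes of uniqueness and reduces the problem to producing a $G$-invariant representative.

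For existence I would exhibit $g$ as a symmetrization of the form~\eqref{eq:G-invariant}. Set $\tilde f(x):=x^pf(x)$ and assign it formal degree $3p$. A direct computation, using the hypothesis $f(x)=-x^pf(1/x)$ (so $f(1/x)=-x^{-p}f(x)$), shows
\[
(R\tilde f)(x)=(-x)^{3p}\tilde f(1/x)=-x^{3p}\cdot x^{-p}f(1/x)=-x^{2p}\bigl(-x^{-p}f(x)\bigr)=x^pf(x)=\tilde f(x),
\]
so $\tilde f$ is $R$-invariant. Applying~\eqref{eq:G-invariant} with $m=3p$ then gives the $G$-invariant polynomial
\[
\tilde f(x)+\tilde f(1-x)+(-x)^{3p}\tilde f(1-1/x)
=x^pf(x)+(1-x)^pf(1-x)+x^{2p}(1-x)^pf(1-1/x),
\]
where in the last term I simplified $(-x)^{3p}(1-1/x)^p=-x^{3p}\cdot(x-1)^p/x^p=x^{2p}(1-x)^p$ using that $p$ is odd. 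In characteristic $p$ we have $(1-x)^p=1-x^p$, so this polynomial is exactly the $g$ from the statement. Hence $g$ is $G$-invariant.

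It remains to verify the congruence $g(x)\equiv f(1-x)\pmod{x^{p+1}}$. Since $\deg f<p$, modulo $x^{p+1}$ we get $x^pf(x)\equiv f(0)\,x^p$ and $(1-x^p)f(1-x)\equiv f(1-x)-f(1)\,x^p$. For the third term, $x^{p-1}f(1-1/x)=\sum_{k=0}^{p-1}a_k\,(x-1)^kx^{p-1-k}$ is an honest polynomial (the negative powers of $x$ in $f(1-1/x)$ go no lower than $x^{-(p-1)}$), so $x^{2p}(1-x^p)f(1-1/x)$ is divisible by $x^{p+1}$ and vanishes modulo $x^{p+1}$. Collecting, $g(x)\equiv f(1-x)+(f(0)-f(1))x^p\pmod{x^{p+1}}$. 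The final step is to show $f(0)=f(1)=0$, which I expect to be the only slightly delicate point: writing $f(x)=\sum_{k=0}^{p-1}a_kx^k$, the hypothesis $f(x)=-x^pf(1/x)$ compared coefficient by coefficient yields $a_k=-a_{p-k}$ for $0\le k\le p$ (with $a_p=0$), whence $a_0=0$ so $f(0)=0$, and $f(1)=\sum_{k=1}^{p-1}a_k=0$ by pairing $k$ with $p-k$ (no fixed point since $p$ is odd). This completes the check and establishes the lemma.
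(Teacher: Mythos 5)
Your proof is correct and follows essentially the same route as the paper's: uniqueness via Lemma~\ref{lem:unique}, $G$-invariance by applying Equation~\eqref{eq:G-invariant} to the $R$-invariant polynomial $x^pf(x)$ of formal degree $3p$, and the congruence modulo $x^{p+1}$ by checking that $0$ and $1$ are roots of $f$. The only cosmetic difference is that the paper gets $f(1)=0$ directly by substituting $x=1$ into $f(x)=-x^pf(1/x)$, whereas you pair coefficients; both are fine.
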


\begin{proof}
That $g$ is $G$-invariant follows by direct verification, or from a previous observation
(namely, by taking $x^pf(x)$ in place of $f$ in Equation~\eqref{eq:G-invariant}).

Our hypotheses on $f$ imply that both $0$ and $1$ are roots of $f$
(as well as $\infty$ if we assign $f$ formal degree $p$), and so
$f(x)$, $f(1-x)$ and $x^pf(1-1/x)$ are all polynomials of ordinary degree less than $p$
and without constant term.
It follows that $g(x)\equiv f(1-x)\pmod{x^{p+1}}$.

Finally, uniqueness of $f$ follows from Lemma~\ref{lem:unique}.
\end{proof}

\begin{proof}
[Proof of Equation~\eqref{eq:L^3}]
According to Lemma~\ref{lem:pounds-powers},
the polynomial
\[
g(x)=\pounds_1(x)^3/6
-(2/3)x^p(1-x^p)\pounds_3(-1)
\]
satisfies the hypotheses of Lemma~\ref{lem:3p} with $f(x)=\pounds_3(x)$.
\end{proof}

%%%%%%%%%%%%%%%%%%%%%%%%%%%%%%%%%%%%%%%%%%%%%%%%%%%%%%%%%%%%%%%%%%%%%%%%%%
\section{Special values of $\pounds_d(x)$}\label{sec:special_values}
%%%%%%%%%%%%%%%%%%%%%%%%%%%%%%%%%%%%%%%%%%%%%%%%%%%%%%%%%%%%%%%%%%%%%%%%%%

In this section we collect several known congruences for special values of the finite polylogarithms $\pounds_d(x)$,
and use the identities for finite polylogaritms from Section~\ref{sec:polylog}
to prove some new ones.

Let $B_n(x)$, $B_n=B_n(0)$ and $E_n$ denote the Bernoulli polynomials, and the Bernoulli and Euler numbers.
Note that $\pounds_d(1)=H_{p-1}(d)$, where $H_{k}(d)=\sum_{r=1}^{k}1/r^d$.
For any prime $p>d+2$ we have
\[
\pounds_d(1)\equiv
\begin{cases}
\displaystyle
-\frac{d(d+1)}{2(d+2)}\,p^2\,B_{p-d-2}  \pmod{p^3} &\mbox{if $d$ is odd,}\\
\displaystyle
\frac{d}{d+1}p\,B_{p-d-1}  \pmod{p^2} &\mbox{if $d$ is even.}
\end{cases}
\]
In essence these were found by Glaisher in 1900 in his articles in Quart.~J.~Math.,
but see~\cite[Theorem~5.1]{ZHS:00} for a sharper result.

To compute $\pounds_d(-1)=-H_{p-1}(d)+2^{1-d}H_{(p-1)/2}(d)$,
we combine the above congruences with the evaluation of $H_{(p-1)/2}(d)$
given in~\cite[Theorem~5.2]{ZHS:00}.
For $d=1$ we find, for any prime $p>3$, the congruence
\[
\pounds_1(-1)\equiv
-2q_p(2)+p q_p^2(2)-p^2\left(\frac{2}{3}\,q_p^3(2)+\frac{1}{4}\,B_{p-3}\right)  \pmod{p^3};
\]
for $d>1$, as soon as $p>d+1$, we obtain
\[
\pounds_d(-1)\equiv
\begin{cases}
\displaystyle
-\frac{2(1-2^{1-d})}{d}\,B_{p-d} \pmod{p} &\mbox{if $d$ is odd,}\\
\displaystyle
\frac{d(1-2^{-d})}{(d+1)}\,p\,B_{p-d-1} \pmod{p^2} &\mbox{if $d$ is even.}
\end{cases}
\]

From~\cite[Theorem 4.1]{ZHS:08b} and Equation~\eqref{C1b} we obtain congruences for $\pounds_d(2)$ and $\pounds_d(1/2)$, all valid for $p>3$:
\begin{align*}
\pounds_1(2) &\equiv -2q_p(2)-\frac{7}{12}\,p^2B_{p-3}
\pmod{p^3},\\
\pounds_2(2) &\equiv -q_p^2(2)+p\left(\frac{2}{3}\,q_p^3(2)+\frac{7}{6}\,B_{p-3}\right)
\pmod{p^2},\\
\pounds_3(2) &\equiv -\frac{1}{3}\,q_p^3(2)-\frac{7}{24}\,B_{p-3}
\pmod{p},\\
\pounds_1(1/2)
&\equiv
q_p(2)-\frac{1}{2}\,pq_p^2(2)+p^2\left(\frac{1}{3}\,q_p^3(2)-\frac{7}{48}\,B_{p-3}\right)
\pmod{p^3},\\
\pounds_2(1/2)
&\equiv
-\frac{1}{2}\,q_p^2(2)+p\left(\frac{1}{2}\,q_p^3(2)+\frac{7}{24}\,B_{p-3}\right)
\pmod{p^2},\\
\pounds_3(1/2)
&\equiv
\frac{1}{6}\,q_p^3(2)+\frac{7}{48}\,B_{p-3}
\pmod{p}.
\end{align*}
The above evaluation of $\pounds_3(2)$ appears also in~\cite{DS:06}.

Finally, according to~\cite[Corollary~2.1]{ZHS:08} combined with Fermat's little theorem,
for $d\geq 1$ and $m,r\geq 0$ we have
\[
\sum_{\substack{0<k<p\\ k\equiv r\pmod{m}}}
\!\!\!\!\!\!\frac{1}{k^d}
\equiv \frac{1}{dm^d}\left(
B_{p-d}\left(\left\{\frac{r}{m}\right\}\right)-B_{p-d}\left(\left\{\frac{r-p}{m}\right\}\right)
\right)\pmod{p},
\]
provided the prime $p$ satisfies $p>d+3$ and $p\nmid m$,
where $\{x\}=x-\lfloor x\rfloor$ is the fractional part of $x$.
The above relation can be used to compute $\pounds_d(x)$ modulo $p$ where $x$ is an $m$-th root of unity.
In particular, one finds that
\begin{align*}
\pounds_2(\pm i) &\equiv \frac{1}{16}\left(\leg{-1}{p}\pm i\right)B_{p-2}(1/4)=\frac{1}{2}\left(\leg{-1}{p}\pm i\right)E_{p-3}
\pmod{p},\\
\pounds_3(\pm i) &\equiv \frac{1}{32}\left(-1\pm\leg{-1}{p}i\right)\,B_{p-3}
\pmod{p},
\end{align*}
and
\begin{align*}
\pounds_2(\omega_6^{\pm 1}) &\equiv \frac{1}{8}\biggl(\leg{p}{3}\pm i\frac{\sqrt{3}}{3}\biggr)B_{p-2}(1/3),
&
\pounds_2(-\omega_6^{\pm 1}) &\equiv \frac{1}{12}\left(\leg{p}{3}\mp i{\sqrt{3}}\right)B_{p-2}(1/3),\\
\pounds_3(\omega_6^{\pm 1}) &\equiv \frac{1}{18}\left(1\mp i{\sqrt{3}}\leg{p}{3}\right)B_{p-3},
&
\pounds_3(-\omega_6^{\pm 1}) &\equiv \frac{2}{9}\biggl(-1\mp i\frac{\sqrt{3}}{3}\leg{p}{3}\biggr)B_{p-3},\\
\end{align*}
all four congruences being modulo $p$.

After recalling these known evaluations, we put to good use the group of transformations $G$
which we introduced in Section~\ref{sec:polylog-proofs}, Equation~\eqref{eq:G}.
Recall that its orbits on any field $F$ have all length six, with the only exceptions of
$\{1,0,\infty\}$, $\{-1,2,1/2\}$, and possibly $\{\omega_6,\omega_6^{-1}\}$
if the field contains a root $\omega_6$
of the polynomial $x^2-x+1$.
We now consider three particular orbits of lenght six, namely
\begin{gather*}
\{i,-i,1+i,1-i,(1+i)/2,(1-i)/2\},\\
\{-\omega_6,-\omega_6^{-1},1+\omega_6,1+\omega_6^{-1},(1+\omega_6)/3,(1+\omega_6^{-1})/3\},\\
\{\phi_+,\phi_-,\phi_+^2,\phi_-^2,-\phi_+,-\phi_- \},
\end{gather*}
where
$\phi_{\pm}=(1\pm\sqrt{5})/2$ are the roots of the polynomial $x^2-x-1$.
For each of these orbits the congruences given in Equations~\eqref{C1}--\eqref{C4} provide
several linear relations among the values modulo $p$ of $\pounds_2(\alpha)$
with $\alpha$ ranging over the orbit.
In the first two cases this will allow us to recover all those values
from just one which is available from the literature, and in the third case the
relations alone are sufficient to determine all those values.
At this point we need the following lemma.

\begin{lem}\label{lem:Euler}
Let $p$ be an odd prime and let $a$ be an integer not divisible by $p$. Then
\begin{equation*}
\leg{a}{p}a^{(p-1)/2}\equiv \sum_{k=0}^{n-1}\binom{1/2}{k}\bigl(p\,q_p(a)\bigr)^k \pmod{p^n}
\end{equation*}
for any positive integer $n$.
\end{lem}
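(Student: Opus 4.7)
The plan is to identify both sides of the asserted congruence as the unique square root of $a^{p-1}$ in $\Z/p^n\Z$ that reduces to $1$ modulo $p$, and conclude by that uniqueness.

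First I would handle the left-hand side. By the very definition of the Fermat quotient, $a^{p-1} = 1 + p\,q_p(a)$, so $u := \leg{a}{p} a^{(p-1)/2}$ satisfies $u^2 = a^{p-1} = 1 + p\,q_p(a)$; and by Euler's criterion $a^{(p-1)/2}\equiv\leg{a}{p}\pmod{p}$, whence $u\equiv\leg{a}{p}^2 = 1\pmod{p}$.

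Next I would verify the same two properties for $T_n := \sum_{k=0}^{n-1}\binom{1/2}{k}\bigl(p\,q_p(a)\bigr)^k$. Since $p$ is odd we have $1/2\in\Z_p$, and the standard integrality statement $\binom{\alpha}{k}\in\Z_p$ for $\alpha\in\Z_p$ gives $\binom{1/2}{k}\in\Z_p$; consequently each term of $T_n$ with $k\ge 1$ lies in $p\Z_p$, so $T_n\equiv 1\pmod{p}$, and moreover the $p$-adically convergent full series $S:=\sum_{k\ge 0}\binom{1/2}{k}(p\,q_p(a))^k$ satisfies $S - T_n\in p^n\Z_p$. The formal identity $\bigl(\sum_{k\ge 0}\binom{1/2}{k}x^k\bigr)^2 = 1+x$ in $\Q_p[[x]]$ specialises to $S^2 = 1 + p\,q_p(a)$ in $\Z_p$, and then $T_n^2 - S^2 = (T_n - S)(T_n + S)\in p^n\Z_p$ yields $T_n^2 \equiv 1 + p\,q_p(a)\pmod{p^n}$.

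Finally I would invoke uniqueness: for $p$ odd the squaring map on $1+p\Z/p^n\Z$ is injective (Hensel's lemma, or an elementary induction using that multiplication by $2$ is invertible modulo $p$), so the two elements $u$ and $T_n$, both $\equiv 1\pmod{p}$ and with squares $\equiv 1 + p\,q_p(a)\pmod{p^n}$, must coincide modulo $p^n$. The main technical point is controlling the binomial tail $S - T_n$, which relies on the integrality of $\binom{1/2}{k}$ as an external input; a self-contained alternative is induction on $n$, with base case $n=1$ given by Euler's criterion, and inductive step where, writing $u = T_n + p^n c$ and expanding $u^2 = 1 + p\,q_p(a)$ modulo $p^{n+1}$, the coefficient is forced to be $c \equiv \binom{1/2}{n}\,q_p(a)^n\pmod{p}$, giving $u \equiv T_{n+1}\pmod{p^{n+1}}$.
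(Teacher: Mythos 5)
Your proposal is correct and follows essentially the same route as the paper: the paper also identifies $\leg{a}{p}a^{(p-1)/2}$ and the full binomial series $\sum_{k\ge 0}\binom{1/2}{k}(p\,q_p(a))^k$ as the two square roots of $a^{p-1}=1+p\,q_p(a)$ in $\Z_p$ that are congruent to $1$ modulo $p$, and then truncates. Your write-up merely makes explicit the tail estimate (via $\binom{1/2}{k}\in\Z_p$) and the uniqueness of square roots in $1+p\Z/p^n\Z$, which the paper leaves implicit.
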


\begin{proof}
\comm{Double {\em assertion}}
The assertion, whose special case $n=1$ is a familiar assertion, follows from the fact that
\[
\leg{a}{p}a^{(p-1)/2}=
\sum_{k=0}^{\infty}\binom{1/2}{k}\bigl(p\,q_p(a)\bigr)^k
\]
in the ring of $p$-adic integers $\Z_p$.
The latter is true because both sides are square roots of the integer $a^{p-1}=1+p\,q_p(a)$ in $\Z_p$,
and both are congruent to $1$ modulo $p$.
\end{proof}

\begin{thm}\label{thm:i}
For any prime $p>3$ we have
\begin{align*}
\pounds_2(1{\pm}i)&\equiv
-\frac{q_p^2(2)}{8}\,\left(1\pm i\leg{-1}{p}\right)+\frac{1}{2}\leg{-1}{p}E_{p-3} \pmod{p},\\
\pounds_2\bigl((1{\pm}i)/2\bigr)&\equiv
-\frac{q_p^2(2)}{8}\,+\frac{1}{4}\left(\leg{-1}{p}\pm i\right)E_{p-3} \pmod{p}.
\end{align*}
\end{thm}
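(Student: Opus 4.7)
The plan is to determine the four unknown values $\pounds_2(1\pm i)$ and $\pounds_2((1\pm i)/2)$ starting from the known values $\pounds_2(\pm i)\equiv\tfrac{1}{2}(\varepsilon\pm i)E_{p-3}\pmod{p}$ (where $\varepsilon:=\leg{-1}{p}$) recorded earlier in the section. Two elementary facts are used throughout: the exact identity $i^p=\varepsilon i$, and the Frobenius congruences $(1\pm i)^p\equiv 1\pm\varepsilon i\pmod{p}$. The main obstacle is the first step below, the computation of $\pounds_1(i)\pmod{p}$; once that is in hand everything else is mechanical.

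To compute $\pounds_1(i)\pmod{p}$, I would combine the inversion relation~\eqref{C1} at $x=i$, which reads $\pounds_1(-i)\equiv\varepsilon i\,\pounds_1(i)\pmod{p}$, with the distribution relation~\eqref{C6} for $m=2$, $d=1$, $x=i$:
\[
\pounds_1(-1)\equiv(1+\varepsilon i)\pounds_1(i)+(1-\varepsilon i)\pounds_1(-i)\pmod{p}.
\]
Eliminating $\pounds_1(-i)$ and substituting the classical value $\pounds_1(-1)\equiv-2q_p(2)\pmod{p}$ collapses the relation to $2(1+\varepsilon i)\pounds_1(i)\equiv-2q_p(2)\pmod{p}$; rationalizing by $(1-\varepsilon i)$ yields $\pounds_1(i)\equiv-\tfrac{1}{2}q_p(2)(1-\varepsilon i)\pmod{p}$, and therefore $\pounds_1(i)^2\equiv-\tfrac{1}{2}\varepsilon i\,q_p^2(2)\pmod{p}$.

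Next, I would apply~\eqref{eq:L^2} (equivalently~\eqref{C4} via $Q_p(i)\equiv-\pounds_1(i)\pmod{p}$) at $x=i$, obtaining
\[
\tfrac{1}{2}\pounds_1(i)^2\equiv-\varepsilon i\,\pounds_2(i)-(1-\varepsilon i)\pounds_2(1-i)\pmod{p}.
\]
Solving for $\pounds_2(1-i)$, multiplying by $(1+\varepsilon i)/2$, substituting the known $\pounds_2(i)$ and the just-computed $\pounds_1(i)^2$, and simplifying with the identities $(1-\varepsilon i)(\varepsilon+i)=2\varepsilon$ and $-\varepsilon i(1+\varepsilon i)=1-\varepsilon i$, produces the stated formula for $\pounds_2(1-i)$; the companion formula for $\pounds_2(1+i)$ comes out of the identical argument at $x=-i$.

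Finally, the inversion relation~\eqref{C1} for $\pounds_2$ at $x=1+i$ reads $\pounds_2(1+i)\equiv(1+\varepsilon i)\pounds_2((1-i)/2)\pmod{p}$, so $\pounds_2((1-i)/2)\equiv\pounds_2(1+i)\cdot(1-\varepsilon i)/2\pmod{p}$. Plugging in the expression for $\pounds_2(1+i)$ just derived and using $\varepsilon(1-\varepsilon i)/2=(\varepsilon-i)/2$ produces the stated formula for $\pounds_2((1-i)/2)$; the analogous computation at $x=1-i$ delivers $\pounds_2((1+i)/2)$.
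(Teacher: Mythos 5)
Your argument is correct, and it reaches the stated congruences by a genuinely different route from the paper's. The paper works modulo $p^2$ with real parts only: it extracts $a=\mathrm{Re}\,\pounds_2(1+i)$ from the coefficient of $p$ in Granville's congruence~\eqref{C2}, which forces it to import the mod-$p^2$ expansion of $\mathrm{Re}\,\pounds_1(i)$ from the literature and to compute $\mathrm{Re}\,Q_p(1-i)$ modulo $p^2$ via Lemma~\ref{lem:Euler}, and it then recovers the imaginary part $b$ separately from the three-term relation~\eqref{C3}. You instead stay entirely modulo $p$: the inversion and distribution relations together with $\pounds_1(-1)\equiv -2q_p(2)$ pin down the full Gaussian value $\pounds_1(i)\equiv -\tfrac{1}{2}q_p(2)\bigl(1-i\leg{-1}{p}\bigr)$, after which the Mirimanoff--Granville congruence~\eqref{eq:L^2} at $x=i$ becomes a single linear equation determining $\pounds_2(1-i)$ outright (both real and imaginary parts at once), with the remaining values following by inversion. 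I checked your algebra; the constants come out as stated. What each approach buys: yours dispenses with Lemma~\ref{lem:Euler} and with the mod-$p^2$ input for $\mathrm{Re}\,\pounds_1(i)$, and is closer in spirit to the linear-system method of Theorem~\ref{thm:phi}; the paper's uses only the real part of the quoted value $\pounds_2(\pm i)\equiv\tfrac12\bigl(\leg{-1}{p}\pm i\bigr)E_{p-3}$, whereas you need the full complex value --- a negligible extra cost, since both parts come from the same source. The only points worth making explicit in a final write-up are that all divisions performed (by $1\pm i\leg{-1}{p}$ and by $2$) are legitimate because the relevant norms are powers of $2$, hence prime to $p$, and that $\pounds_2(1+i)$ is obtained either by conjugation or by running the same computation at $x=-i$, as you indicate.
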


\begin{proof}
We first compute $\pounds_2(1{\pm}i)=a\pm ib$,
from which the remaining values can be obtained by means of the inversion relation~\eqref{C1}.
According to~\cite[Theorem 3.2]{ZHS:08b} we have
\begin{align*}
\mbox{Re}\bigl(\pounds_1(i)\bigr)&=\sum_{k=1}^{\lfloor p/4\rfloor}\frac{1}{4k}-\sum_{k=1}^{\lfloor p/4\rfloor}\frac{1}{4k-2}
=\frac{1}{2}\sum_{k=1}^{\lfloor p/4\rfloor}\frac{1}{k}-\frac{1}{2}\sum_{k=1}^{\lfloor p/2\rfloor}\frac{1}{k}\\
&\equiv -\frac{1}{2}\,q_p(2)+\frac{1}{4}\,p\,q_p^2(2)-\frac{1}{2}\,p\leg{-1}{p}\,E_{p-3} \pmod{p^2}.
\end{align*}
Because
\[
(1\pm i)^n=(-1)^{(n^2-1)/8} 2^{(n-1)/2}\left(1\pm(-1)^{(n-1)/2}i\right)
\]
for $n$ odd, Lemma~\ref{lem:Euler} implies
\begin{align*}
\mbox{Re}\bigl(Q_p(1-i)\bigr)
&=
\frac{\mbox{Re}\bigl((1-i)^p\bigr)-1}{p}
=
\frac{\leg{2}{p}2^{(p-1)/2}-1}{p}
\\&
\equiv
\frac{1}{2}\,q_p(2)-\frac{1}{8}\,p\,q^2_p(2)\pmod{p^2}.
\end{align*}
Using Equation~\eqref{C2} we find
\[
\mbox{Re}\bigl(Q_{p}(1-i)\bigr)\equiv -\mbox{Re}\bigl(\pounds_1(i)\bigr)-p\,a\pmod{p^2},
\]
which allows us to determine $a$.
Finally, Equation~\eqref{C3} implies
\[
\frac{1}{2}\leg{-1}{p}E_{p-3}
\equiv \mbox{Re}\bigl(\pounds_2(i)\bigr)
\equiv \mbox{Re}\bigl(a-ib+i^p(a+ib)\bigr)\equiv a-\leg{-1}{p}b\pmod{p},
\]
which yields $b$.
\end{proof}

\begin{thm}\label{thm:omega}
For any prime $p>3$ we have
\begin{align*}
\pounds_2(1+\omega_6^{\pm 1})&\equiv
-\frac{q_p^2(3)}{16}\,\left(3\pm i\sqrt{3}\leg{p}{3}\right)+\frac{1}{36}\left(3\leg{p}{3}\mp i\sqrt{3}\right)B_{p-2}(1/3) \pmod{p},\\
\pounds_2\bigl((1+\omega_6^{\pm 1})/3\bigr)&\equiv
-\frac{q_p^2(3)}{8}+\frac{1}{36}\left(\leg{p}{3}\pm i\sqrt{3}\right)B_{p-2}(1/3) \pmod{p}.
\end{align*}
\end{thm}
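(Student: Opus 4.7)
The plan is to mirror the proof of Theorem~\ref{thm:i}, replacing $i$ by $\omega_6$ and treating the nontrivial Galois automorphism $\sigma:\omega_6\mapsto\omega_6^{-1}$ of $\Q(\omega_6)/\Q$ as the analogue of complex conjugation. I would write $\pounds_2(1+\omega_6)=a+i\sqrt{3}\,b$, so that $\pounds_2(1+\omega_6^{-1})=a-i\sqrt{3}\,b$ by $\sigma$-equivariance; the task then reduces to computing $a$ and $b$ modulo $p$. Once this is done, the second formula falls out of the inversion congruence~\eqref{C1}: the identity $(1+\omega_6)(1+\omega_6^{-1})=3$ yields $1/(1+\omega_6^{\pm1})=(1+\omega_6^{\mp1})/3$ and $\bigl((1+\omega_6^{\pm1})/3\bigr)^p\equiv(1+\omega_6^{\pm p})/3\pmod{p}$, so a direct expansion in the basis $\{1,i\sqrt{3}\}$, with the cases $\leg{p}{3}=\pm1$ handled separately, produces the claimed expression for $\pounds_2((1+\omega_6^{\pm1})/3)$.

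To determine $a$, I would apply Granville's congruence~\eqref{C2} at $x=1+\omega_6$ together with its $\sigma$-conjugate, and add them to cancel the imaginary parts:
\[
Q_p(1+\omega_6)+Q_p(1+\omega_6^{-1})\equiv -\bigl[\pounds_1(-\omega_6)+\pounds_1(-\omega_6^{-1})\bigr]-2pa\pmod{p^2}.
\]
The identity $(1+\omega_6)^2=3\omega_6$ lets me write $1+\omega_6=\sqrt{3}\,\omega_{12}$ for a suitable primitive twelfth root of unity $\omega_{12}$, so that $(1+\omega_6)^p+(1+\omega_6^{-1})^p=3\leg{3}{p}3^{(p-1)/2}$; expanding this via Lemma~\ref{lem:Euler} and using the obvious $(-\omega_6)^p+(-\omega_6^{-1})^p=-1$ yields $Q_p(1+\omega_6)+Q_p(1+\omega_6^{-1})\equiv \tfrac{3}{2}q_p(3)-\tfrac{3p}{8}q_p^2(3)\pmod{p^2}$. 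The complementary sum $\pounds_1(-\omega_6)+\pounds_1(-\omega_6^{-1})$ collapses, using that $(-\omega_6)^k+(-\omega_6^{-1})^k$ equals $2$ if $3\mid k$ and $-1$ otherwise, to $H_{\lfloor(p-1)/3\rfloor}-H_{p-1}$, whose mod-$p^2$ evaluation (including the $B_{p-2}(1/3)$ correction) is the classical Glaisher--Lerch-type result available in~\cite{ZHS:08b}. Substituting back and solving for $a$ produces the stated formula.

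To recover $b$, I would apply the three-term relation~\eqref{C3} at $x=-\omega_6$, using $1-1/(-\omega_6)=1+\omega_6^{-1}$, to obtain
\[
\pounds_2(-\omega_6)\equiv\pounds_2(1+\omega_6)+(-\omega_6)^p\pounds_2(1+\omega_6^{-1})\pmod{p}.
\]
Plugging in $\pounds_2(1+\omega_6^{\pm1})=a\pm i\sqrt{3}\,b$, using that $(-\omega_6)^p$ equals $-\omega_6$ or $-\omega_6^{-1}$ according as $\leg{p}{3}=1$ or $-1$, and matching against the known value $\pounds_2(-\omega_6)\equiv\tfrac{1}{12}(\leg{p}{3}-i\sqrt{3})B_{p-2}(1/3)\pmod{p}$ from the list at the start of this section, collapses to the single linear equation $\leg{p}{3}\,a-3b\equiv\tfrac{1}{6}B_{p-2}(1/3)\pmod{p}$, which combined with the value of $a$ just found determines $b$. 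The main obstacle in the plan is the mod-$p^2$ evaluation of $H_{\lfloor(p-1)/3\rfloor}$ at precision fine enough to capture the $B_{p-2}(1/3)$ term; everything else is routine bookkeeping with sixth-root-of-unity arithmetic, though one must keep careful track of the two cases $\leg{p}{3}=\pm1$ throughout.
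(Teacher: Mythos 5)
Your proposal is correct and follows essentially the same route as the paper: determine $\pounds_2(1+\omega_6^{\pm1})=a\pm ib$ by combining Granville's congruence~\eqref{C2} at $x=1+\omega_6$ (equivalently, its real part, which is your sum of Galois conjugates) with the evaluation of $\pounds_1(-\omega_6)$ from~\cite{ZHS:08b} and Lemma~\ref{lem:Euler} applied to $1+\omega_6=\sqrt{3}\,i\,\omega_6^{-1}$ to get $a$, then use the three-term relation~\eqref{C3} at $x=-\omega_6$ together with the known value of $\pounds_2(-\omega_6)$ to get $b$, and finally deduce the second family of values from the inversion relation~\eqref{C1} via $(1+\omega_6)(1+\omega_6^{-1})=3$. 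The only differences are notational (your basis $\{1,i\sqrt{3}\}$ versus the paper's $a\pm ib$, and summing conjugates versus taking real parts).
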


\begin{proof}
We compute $\pounds_2(1+\omega_6^{\pm 1})=a\pm ib$,
and the other congruence will follow from the inversion relation~\eqref{C1}.
From~\cite[Theorem 3.9]{ZHS:08b} we have
\begin{align*}
\mbox{Re}\bigl(\pounds_1(-\omega_6)\bigr)
&=\frac{3}{2}\sum_{k=1}^{\lfloor p/3\rfloor}\frac{1}{3k}-\frac{1}{2}\sum_{k=1}^{p-1}\frac{1}{k}\\
&\equiv -\frac{3}{4}\,q_p(3)+\frac{3}{8}\,p\,q_p^2(3)-\frac{1}{12}\,p\leg{p}{3}\,B_{p-2}(1/3) \pmod{p^2}.
\end{align*}
Because
\[
\omega_6^{\pm n}=\frac{(-1)^{n-1}}{2} \left(1\pm i\sqrt{3}\leg{n}{3}\right)
\]
if $n$ is not a multiple of $3$,
Lemma~\ref{lem:Euler} implies
\begin{align*}
\mbox{Re}\bigl(Q_p(1+\omega_6)\bigr)
&=
\frac{\mbox{Re}\bigl((\sqrt{3}\,i\,\omega_6^{-1})^p\bigr)-\mbox{Re}(\omega_6^p)-1}{p}
=\frac{3}{2}\,\frac{\leg{3}{p}3^{(p-1)/2}-1}{p}
\\&\equiv
\frac{3}{4}\,q_p(3)-\frac{3}{16}\,p\,q^2_p(3)\pmod{p^2}.
\end{align*}
Using Equation~\eqref{C2} we find
\[
\mbox{Re}\bigl(Q_{p}(1+\omega_6)\bigr)\equiv -\mbox{Re}\bigl(\pounds_1(-\omega_6)\bigr)-p\,a\pmod{p^2},
\]
from which we can determine $a$.
Finally, Equation~\eqref{C3} implies
\[
\frac{1}{12}\leg{p}{3}B_{p-2}(1/3)
\equiv \mbox{Re}\bigl(\pounds_2(-\omega_6)\bigr)
\equiv \mbox{Re}\bigl(a+ib-\omega_6^p(a-ib)\bigr)
\equiv \frac{1}{2}\left(a-\sqrt{3}\leg{p}{3}\,b\right)\pmod{p},
\]
which yields $b$.
\end{proof}

\begin{thm}\label{thm:phi}
For any prime $p>5$ we have
\begin{align*}
\pounds_2(\phi_{\pm})&\equiv
\mp \frac{\sqrt{5}}{10}\leg{p}{5}q_L^2\pmod{p},\\
\pounds_2(\phi^2_{\pm})&\equiv -\frac{1}{2}\biggl(1\pm \frac{\sqrt{5}}{5}\leg{p}{5}\biggr)q_L^2 \pmod{p},\\
\pounds_2(-\phi_{\pm})&\equiv -\frac{1}{4}\biggl(1\pm \frac{\sqrt{5}}{5}\leg{p}{5}\biggr)q_L^2 \pmod{p},
\end{align*}
where $q_L=Q_p(\phi_{\pm})=(L_p-1)/p$ is the {\em Lucas quotient}.
Moreover, we have
\[
\pounds_3(\phi_{\pm}^2)\equiv -\frac{2}{15}\left(1\pm\sqrt{5}\leg{p}{5}\right)\left(\frac{1}{2}q_L^3 +B_{p-3}\right)
\pmod{p}.
\]
\end{thm}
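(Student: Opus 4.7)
The approach mirrors that of Theorems~\ref{thm:i} and~\ref{thm:omega}: the six orbit points are linked by the $G$-action together with the Galois symmetry $\sqrt{5}\mapsto-\sqrt{5}$, and the congruences of Section~\ref{sec:polylog} provide enough linear relations to determine all the $\pounds_d$-values on the orbit. For $\pounds_2$, set $A=\pounds_2(\phi_+)$ and $A'=\pounds_2(\phi_-)$; using $1/\phi_\pm=-\phi_\mp$, $1-\phi_\pm=\phi_\mp$ and $\phi_\pm^2=1+\phi_\pm$, the inversion~\eqref{C1} together with the three-term relation~\eqref{C3} at $\phi_+$ expresses the other four values modulo $p$ in terms of $A,A'$, namely
\[
\pounds_2(-\phi_+)\equiv-\phi_+^pA',\
\pounds_2(-\phi_-)\equiv-\phi_-^pA,\
\pounds_2(\phi_+^2)\equiv\phi_+^p(A-A'),\
\pounds_2(\phi_-^2)\equiv-\phi_-^p(A-A').
\]
Since $Q_p(\phi_\pm)=(L_p-1)/p=q_L$, applying~\eqref{C4} at $x=\phi_+$ yields the equation $\tfrac12 q_L^2\equiv-\phi_+^pA-(1-\phi_+^p)A'\pmod{p}$; one checks that~\eqref{C4} at any other orbit point reproduces the same equation, so a second input is needed.

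I would supply it via the distribution relation~\eqref{C6} with $d=m=2$ at $x=\phi_+$, namely $\pounds_2(\phi_+^2)\equiv 2(1+\phi_+^p)\pounds_2(\phi_+)+2(1-\phi_+^p)\pounds_2(-\phi_+)\pmod{p}$. Substituting the expressions above and noting that $u:=\phi_+^p\in\{\phi_+,\phi_-\}$ in $\F_p[\sqrt{5}]$ (so $u^2\equiv u+1\pmod{p}$), this collapses after a short manipulation to $(u+2)(A+A')\equiv 0\pmod{p}$; since $p>5$ forces $u\not\equiv-2$, we conclude $A+A'\equiv 0$. Combined with the previous equation this determines both $A$ and $A'$, from which the remaining four values follow.

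For $\pounds_3(\phi_\pm^2)$ the same philosophy applies, with~\eqref{C5} in place of~\eqref{C4}. Set $R=\pounds_3(\phi_+^2)$, so that $\pounds_3(\phi_-^2)\equiv\bar R$ by Galois conjugation. The inversion~\eqref{C1} for $d=3$ gives $\pounds_3(-\phi_\pm)\equiv\phi_\pm^p\pounds_3(\phi_\mp)$ and $\bar R\equiv(u-2)R\pmod{p}$; the distribution relation~\eqref{C6} with $d=3$, $m=2$ at $x=\phi_+$ yields $R\equiv 4(1+u)\pounds_3(\phi_+)-4\pounds_3(\phi_-)\pmod{p}$; and~\eqref{C5} at $x=\phi_+$, using $u(1-u)\equiv-1$ and $\pounds_3(-1)\equiv-\tfrac12 B_{p-3}\pmod{p}$, gives $\tfrac16 q_L^3\equiv-u\pounds_3(\phi_+)-\bar u\pounds_3(\phi_-)+u\bar R-\tfrac13 B_{p-3}\pmod{p}$. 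Summing each of the last two congruences with its Galois conjugate and setting $T_P:=u\pounds_3(\phi_+)+\bar u\pounds_3(\phi_-)$ and $T_R:=u\bar R+\bar u R$, one obtains the linear system $T_R\equiv-8T_P$ and $T_R-2T_P\equiv\tfrac13 q_L^3+\tfrac23 B_{p-3}\pmod{p}$, which solves to $T_R\equiv\tfrac{4}{15}(q_L^3+2B_{p-3})$. Since the C1 constraint rewrites as $T_R\equiv 2\bar uR$, we obtain $R\equiv-\tfrac{u}{2}T_R\equiv-\tfrac{2}{15}\bigl(1+\leg{p}{5}\sqrt{5}\bigr)\bigl(\tfrac12 q_L^3+B_{p-3}\bigr)$, matching the statement.

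The main obstacle is algebraic bookkeeping in $\F_p[\sqrt{5}]$: the $G$-symmetry renders many applications of~\eqref{C4} or~\eqref{C5} equivalent modulo $p$, so one must combine them carefully with the distribution relation~\eqref{C6} to obtain enough independent equations to close the system in both computations.
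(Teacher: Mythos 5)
Your proposal is correct and follows essentially the same route as the paper: both assemble a small linear system over $\F_p[\sqrt{5}]$ from the inversion relation~\eqref{C1}, the three-term relation~\eqref{C3}, Granville's congruence~\eqref{C4} and the distribution relation~\eqref{C6} for the $\pounds_2$ values (and from \eqref{C1}, \eqref{C5}, \eqref{C6} for $\pounds_3(\phi_\pm^2)$), and then solve it using $2\phi_\pm^p\equiv 1\pm\sqrt{5}\leg{p}{5}$. Your reduction to $A+A'\equiv 0$ and the trace variables $T_P,T_R$ is merely a different organization of the same computation, and all your intermediate identities check out; incidentally, your form of the distribution relation, with coefficient $2(1-\phi_+^p)=2\phi_-^p$ on $\pounds_2(-\phi_+)$, is the one consistent with \eqref{C6} and with the final answer.
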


\begin{proof}
The distribution relation~\eqref{C6} with $m=2$ and $d=2$ yields
\[
\pounds_2(\phi_+^2)\equiv 2\phi_+^{2p}\pounds_2(\phi_+)+2\phi_+^{p}\pounds_2(-\phi_+)\pmod{p}.
\]
Equation~\eqref{C3} and the inversion relation~\eqref{C1} yield
\[
\pounds_2(\phi_+^2) -\pounds_2(-\phi_+)\equiv \phi_+^{2p}\pounds_2(-\phi_-)\equiv \phi_+^{p}\pounds_2(\phi_+) \pmod{p}.
\]
Equation~\eqref{C4} and the inversion relation~\eqref{C1} yield
\[
\frac{1}{2}q_L^2 +\phi_+^p\pounds_2(\phi_+)\equiv -\phi_-^p\pounds_2(\phi_-)\equiv -\phi_-^{2p}\pounds_2(-\phi_+)\pmod{p}.
\]
Solving the linear system for $\pounds_2(\phi_+)$, $\pounds_2(\phi^2_+)$ and $\pounds_2(-\phi_+)$  given by the above three congruences, and using
\[
2\phi_{\pm}^p\equiv \left(1\pm\sqrt{5}\leg{p}{5}\right)\quad\mbox{and}\quad
2\phi_{\pm}^{2p}\equiv \left(3\pm\sqrt{5}\leg{p}{5}\right) \pmod{p},
\]
one obtains the three stated congruences involving $\pounds_2$.

In a similar way one evaluates $\pounds_2$ and $\pounds_3$ at $\phi_{\pm}^2$.
The distribution relation~\eqref{C6}, with $m=2$ and $d=3$, combined with the inversion relation~\eqref{C1}, yields
\[
\pounds_3(\phi_+^2)-4\phi_+^{2p}\pounds_3(\phi_+)\equiv -4\pounds_3(\phi_-)\pmod{p}.
\]
Also, congruence~\eqref{C5} yields
\[
\frac{1}{6}q_L^3 +\frac{1}{3}B_{p-3}\equiv -\phi_-^p\pounds_3(\phi_-)-\phi_+^p\pounds_3(\phi_+)
+\phi_-^{p}\pounds_3(\phi_+^2) \pmod{p}.
\]
Solving for $\pounds_3(\phi_{+}^2)$ we find
\[
\pounds_3(\phi_{+}^2)\equiv-\frac{4\phi_{+}^p}{15}\left(\frac{1}{2}q_L^3 +B_{p-3}\right)
\equiv -\frac{2}{15}\left(1+\sqrt{5}\leg{p}{5}\right)\left(\frac{1}{2}q_L^3 +B_{p-3}\right)\pmod{p}.
\]
The analogous congruence for $\pounds_3(\phi_{-}^2)$ is obtained by interchanging the subscripts $+$ and $-$ throughout the proof.
\end{proof}

%%%%%%%%%%%%%%%%%%%%%%%%%%%%%%%%%%%%%%%%%%%%%%%%%%%%%%%%%%%%%%%%%%%%%%%%%%
\section{Polynomial identities}\label{sec:identities}
%%%%%%%%%%%%%%%%%%%%%%%%%%%%%%%%%%%%%%%%%%%%%%%%%%%%%%%%%%%%%%%%%%%%%%%%%%

The main goal of this section,
which we achieve in Theorem~\ref{thm:recurrences},
is to obtain identities which allow one to replace the two general partial sums
$\sum_{k=1}^n k^{-s}\binom{2k}{k}^{-1}t^k$, with $s=1,2,3$ (and also higher, in principle),
with more manageable sums.
Those involve the familiar Lucas sequences $\{u_n(x)\}_{n\geq 0}$ and $\{v_n(x)\}_{n\geq 0}$
defined by the recurrence relations
\begin{align*}
u_0(x)&=0, &u_1(x)&=1, &\mbox{and}\quad u_n(x)&=x\,u_{n-1}(x)-u_ {n-2}(x)
\quad\mbox{for $n>1$,}\\
v_0(x)&=2, &v_1(x)&=x, &\mbox{and}\quad v_n(x)&=x\,v_{n-1}(x)-v_ {n-2}(x)
\quad\mbox{for $n>1$.}
\end{align*}
They have generating functions
\[
U(z)=\sum_{n\ge 0}u_n(x)z^n=\frac{z}{1-xz+z^2},
\quad\text{and}\quad
V(z)=\sum_{n\ge 0}v_n(x)z^n=\frac{2-xz}{1-xz+z^2},
\]
where we have omitted the dependence of $U(z)$ and $V(z)$ on $x$ in favour of a lighter notation.
It is convenient to view $x$ as an indeterminate rather than a specific number.
Thus, letting $\alpha$ be an element of a quadratic field extension
of the field $\Q(x)$ of rational functions with $\alpha^2-x\alpha+1=0$,
we have
$u_n(x)=(\alpha^n-\alpha^{-n})/(\alpha-\alpha^{-1})$
and
$v_n(x)=\alpha^n+\alpha^{-n}$.
We anticipate that in Section~\ref{sec:congruences} we will obtain polynomial congruences, relative to a prime $p$, which involve sums of the form
$
\sum_{k=1}^{p-1}u_k(x)/k^d
=
\bigl(\pounds_d(\alpha)-\pounds_d(\alpha^{-1})\bigr)/(\alpha-\alpha^{-1})
$
and
$
\sum_{k=1}^{p-1}v_k(x)/k^d
=
\pounds_d(\alpha)+\pounds_d(\alpha^{-1})
$.
Thus, specializations of those polynomial congruences to numerical congruences will follow from knowledge
of special values of finite polylogarithms which we have obtained in Section~\ref{sec:special_values},
as we will illustrate in our final Section~\ref{sec:ZWS}.

Note that $u_{n+1}(x)$ and $v_n(x)$ are even polynomials if $n$ is even, and odd polynomials otherwise.
Some readers of different backgrounds may recognize them as related to the classical Chebyshev
polynomials of the first and second kind $T_n(x)$ and $U_n(x)$, or to their (renormalized) generalizations known as Dickson polynomials
$D_n(x,\alpha)$ and $E_n(x,\alpha)$.
In fact,
\[
u_{n+1}(x)=E_n(x,1)=U_n(x/2),
\quad\text{and}\quad
v_n(x)=D_n(x,1)=2\,T_n(x/2).
\]
The same readers may be aware that
$(d/dx)T_n(x)=n\,U_{n-1}(x)$ for $n>0$,
which becomes $(d/dx)v_n(x)=n\,u_n(x)$ here.
Besides recalling this fact in an integral formulation which is more suitable for us, the following preliminary result provides us with an expression for
a primitive of the polynomial $\bigl(v_n(x)-v_n(-2)\bigl)/(x+2)$.

\begin{lem}\label{lem:integrate}
For any $n>0$ we have
\begin{align}
\label{S1}
\int_0^t u_n(\tau-2)\,d\tau
&=\frac{v_n(t-2)-2(-1)^n}{n},\\
\label{S2}
\int_0^t \frac{v_n(\tau-2)-2(-1)^n}{\tau}\,d\tau
&=\frac{v_n(t-2)-2(-1)^n}{n}
+2\sum_{k=1}^{n-1}(-1)^{n-k}\,\frac{v_k(t-2)-2(-1)^k}{k}.
\end{align}
\end{lem}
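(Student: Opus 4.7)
The first identity \eqref{S1} follows immediately from the fundamental theorem of calculus together with two observations: $(d/dx)v_n(x)=n\,u_n(x)$, a direct consequence of the closed forms $v_n(x)=\alpha^n+\alpha^{-n}$ and $u_n(x)=(\alpha^n-\alpha^{-n})/(\alpha-\alpha^{-1})$, and $v_n(-2)=2(-1)^n$, because $\alpha+\alpha^{-1}=-2$ forces $\alpha=-1$.

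For \eqref{S2}, my plan is to reduce the statement to a derivative check. Both sides vanish at $t=0$: the integral trivially, and each term $v_k(t-2)-2(-1)^k$ on the right by the remark just used. Hence it suffices to verify that the two sides have equal derivatives with respect to $t$. The left-hand side has derivative $(v_n(t-2)-2(-1)^n)/t$ by the fundamental theorem, while differentiating the right-hand side via \eqref{S1} produces $u_n(t-2)+2\sum_{k=1}^{n-1}(-1)^{n-k}u_k(t-2)$. Writing $x=t-2$ and clearing the denominator $t=x+2$, the task reduces to the polynomial identity
\begin{equation*}
v_n(x)-2(-1)^n=(x+2)\Bigl[u_n(x)+2\sum_{k=1}^{n-1}(-1)^{n-k}u_k(x)\Bigr].
\end{equation*}

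The verification of this polynomial identity is where the concrete work lies. I would carry it out via the $\alpha$-parametrization, noting the clean factorizations $x+2=(\alpha+1)^2/\alpha$ and $\alpha-\alpha^{-1}=(\alpha+1)(\alpha-1)/\alpha$, so that the ratio $(x+2)/(\alpha-\alpha^{-1})$ collapses to $(\alpha+1)/(\alpha-1)$. Substituting $u_k(x)=(\alpha^k-\alpha^{-k})/(\alpha-\alpha^{-1})$ into the bracket and evaluating the two geometric series $\sum_{k=1}^{n-1}(-1)^{n-k}\alpha^{\pm k}$ in closed form, a short rearrangement shows that the combined numerator factors as $(\alpha-1)\bigl[\alpha^n+\alpha^{-n}-2(-1)^n\bigr]$. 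The factor $(\alpha-1)$ cancels against the denominator of $(\alpha+1)/(\alpha-1)$, and what remains is precisely $\alpha^n+\alpha^{-n}-2(-1)^n=v_n(x)-2(-1)^n$.

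The sole potential obstacle is the sign bookkeeping in the geometric-sum evaluation and in the ensuing cancellation; once that collapses to the factored form $(\alpha-1)\bigl[\alpha^n+\alpha^{-n}-2(-1)^n\bigr]$, the identity is settled. An alternative route by induction on $n$, using the recurrence $v_n(x)=xv_{n-1}(x)-v_{n-2}(x)$ to derive $w_n(\tau-2)/\tau=w_{n-1}(\tau-2)-2w_{n-1}(\tau-2)/\tau-w_{n-2}(\tau-2)/\tau+2(-1)^{n-1}$ with $w_k=v_k-2(-1)^k$ and then integrating, would also work, but the $\alpha$-parametrization is the cleanest path.
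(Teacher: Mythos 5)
Your proof is correct, but it follows a genuinely different route from the paper's. The paper establishes both identities by generating functions: it first computes $[z^n]\log(z^2-xz+1)=-v_n(x)/n$, then integrates $U(z)$ and $\bigl(V(z)-2/(1+z)\bigr)/\tau$ in closed form with respect to $\tau$ (where $x=\tau-2$) and extracts the coefficient of $z^n$; the alternating sum in \eqref{S2} falls out of the expansion $(1-z)/(1+z)=1+2\sum_{k>0}(-z)^k$, so the right-hand side is \emph{derived} rather than verified. You instead note that both sides of \eqref{S2} vanish at $t=0$ and reduce the claim to equality of derivatives, i.e.\ to the polynomial identity
\[
v_n(x)-2(-1)^n=(x+2)\Bigl[u_n(x)+2\sum_{k=1}^{n-1}(-1)^{n-k}u_k(x)\Bigr],
\]
which you check via the parametrization $x=\alpha+\alpha^{-1}$; I confirmed that the two geometric sums combine to give a numerator $(\alpha-1)\bigl[\alpha^n+\alpha^{-n}-2(-1)^n\bigr]$, so the cancellation against $(\alpha+1)/(\alpha-1)$ goes through and the identity holds. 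Your derivation of \eqref{S1} from $(d/dx)v_n(x)=n\,u_n(x)$ and $v_n(-2)=2(-1)^n$ is precisely the ``integral formulation'' the paper alludes to just before the lemma. The trade-off: the paper's computation treats all $n$ at once and explains where the alternating sum comes from, while your argument is more elementary and self-contained but presupposes the stated answer and requires somewhat fiddlier sign bookkeeping in the final verification.
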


\begin{proof}
Temporarily viewing $x$ as a complex constant and working in the formal power series ring $\C[[z]]$, write $z^2-xz+1=(1-\alpha z)(1-\beta z)$.
Then $V(z)=(1-\alpha z)^{-1}+(1-\beta z)^{-1}$, and so for $n>0$ we have
\[
[z^n]\log(z^2-xz+1)
=[z^n]\bigl(\log(1-\alpha z)+\log(1-\beta z)\bigr)
=-(\alpha^n+\beta^n)/n
=-v_n(x)/n.
\]
Using this and setting $x=\tau-2$ we obtain
\begin{align*}
\int_0^t u_n(\tau-2)\,d\tau
&=[z^n]\int_0^t U(z)\,dt\\
&=[z^n]\bigl(-\log(z^2-(t-2)z+1)+2\log(1+z)\bigr)=\frac{v_n(t-2)-2(-1)^n}{n}.
\end{align*}
Similarly, but with a slightly more complicated integrand, we obtain
\begin{align*}
\int_0^t \frac{v_n(\tau-2)-2(-1)^n}{\tau}\,d\tau
&=[z^n]\int_0^t \left(V(z)-\frac{2}{1+z}\right)\,\frac{d\tau}{\tau}\\
&=[z^n]\left(\bigl(-\log(z^2-(t-2)z+1)+2\log(1+z)\bigr)\cdot\frac{1-z}{1+z}\right)\\
&=\frac{v_n(t-2)-2(-1)^n}{n}
+2\sum_{k=1}^{n-1}(-1)^{n-k}\,\frac{v_k(t-2)-2(-1)^k}{k},
\end{align*}
where we have expanded
$(1-z)/(1+z)=1-2z/(1+z)
=1+2\sum_{k>0}(-z)^k$
for the last passage.
\end{proof}

We are now ready to state the main result of this section, which expresses certain sums of the form
$\sum_{k=1}^n k^{-s}\binom{2k}{k}^{-1}t^k$
in terms of other sums involving our Lucas sequences.
The crucial case is Equation~\eqref{I3}, where $s=1$, from which the other equations will follow by integration using Lemma~\ref{lem:integrate}.
The case $s=0$ excluded here may be obtained from Equation~\eqref{I3} by differentiation,
but we prefer to deal with it differently in Theorem~\ref{thm:identities}.

\begin{thm}\label{thm:recurrences}
For $n\geq 1$ we have the polynomial identities
\begin{align}
\label{I3}
\binom{2n}{n}\sum_{k=1}^n \frac{t^{k-1}}{k\binom{2k}{k}}
&=
\sum_{k=1}^{n} \binom{2n}{n-k}\frac{u_{k}(t-2)}{k},\\
\label{I4}
\binom{2n}{n}\sum_{k=1}^n \frac{t^k}{k^2\binom{2k}{k}}
&=
\sum_{k=1}^{n} \binom{2n}{n-k}\frac{v_{k}(t-2)}{k^2}+\binom{2n}{n}\sum_{k=1}^n \frac{1}{k^2},\\
\label{I4b}
\binom{2n}{n}\sum_{k=1}^n \frac{t^k}{k^3\binom{2k}{k}}
&=
\sum_{k=1}^{n} \binom{2n}{n-k}\frac{v_{k}(t-2)}{k^3}\\ \nonumber
&\;\;\; +2\sum_{1\leq j<k\leq n}\binom{2n}{n-k}\frac{(-1)^{k-j}\,v_{j}(t-2)}{jk^2}
+\binom{2n}{n}\sum_{k=1}^n \frac{1}{k^3}.
\end{align}
\end{thm}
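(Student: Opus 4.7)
The plan is to prove \eqref{I3} by induction on $n$, then derive \eqref{I4} and \eqref{I4b} from it by integration via Lemma~\ref{lem:integrate}. Denote the two sides of \eqref{I3} by $L_n(t)$ and $R_n(t)$. Peeling off the $k=n+1$ term of $L_{n+1}$ and using $(n+1)\binom{2n+2}{n+1}=2(2n+1)\binom{2n}{n}$ yields immediately $(n+1)L_{n+1}(t) = 2(2n+1)L_n(t)+t^n$ with $L_1(t)=1$. For the same recurrence on $R_n$, substitute $\binom{2n+2}{n+1-k} = \frac{(2n+1)(2n+2)}{(n+1)^2-k^2}\binom{2n}{n-k}$ (valid for $k\le n$) and use the simplification $\frac{k}{(n+1)^2-k^2}\binom{2n}{n-k} = \frac{k}{(2n+1)(2n+2)}\binom{2n+2}{n+1-k}$ to collapse $(n+1)R_{n+1}(t)-2(2n+1)R_n(t)$ to $\frac{1}{n+1}\sum_{k=1}^{n+1}k\binom{2n+2}{n+1-k}u_k(t-2)$. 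The induction step therefore reduces to the auxiliary identity
\[
\sum_{k=1}^{m}k\binom{2m}{m-k}u_k(t-2) = m\,t^{m-1}\qquad (m\ge 1).\qquad(\ast)
\]

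To prove $(\ast)$, let $B(z)=1/\sqrt{1-4z}$ and $C(z)=(1-\sqrt{1-4z})/(2z)$ be the generating functions for $\binom{2n}{n}$ and the Catalan numbers, satisfying the Catalan identity $zC^2=C-1$ and the relation $C(B+1)=2B$. From the classical expansion $\sum_{m\ge k}\binom{2m}{m-k}z^m = z^kB(z)C(z)^{2k}$ and the substitution $X = C(z)-1 = zC(z)^2$, the generating series (in $z$) of the left-hand side of $(\ast)$ becomes $B(z)\sum_{k\ge 1}ku_k(t-2)X^k$. Differentiating $\sum_{k\ge 0}u_k(t-2)X^k = X/((1+X)^2-tX)$ with respect to $X$ gives $\sum_{k\ge 1}ku_k(t-2)X^k = X(1-X^2)/((1+X)^2-tX)^2$. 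The key reduction $(1+X)^2-tX = C^2-tC+t = (C-1)(1-tz)/z = X(1-tz)/z$ (a direct consequence of $zC^2=C-1$), together with $1-X^2=(2-C)C$ and $B(2-C)=C$, collapses the product to $z/(1-tz)^2 = \sum_{m\ge 1}mt^{m-1}z^m$. Matching coefficients of $z^m$ yields $(\ast)$, and the base case $R_1(t)=L_1(t)=1$ completes the proof of \eqref{I3}.

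For \eqref{I4}, integrate \eqref{I3} from $0$ to $t$ and apply Equation~\eqref{S1} of Lemma~\ref{lem:integrate}: this produces the claimed right-hand side up to a $t$-independent correction whose vanishing amounts to the classical identity $2\sum_{k=1}^n(-1)^{k-1}\binom{2n}{n-k}/k^2 = \binom{2n}{n}\sum_{k=1}^n 1/k^2$, verifiable directly by induction on $n$ via Pascal's rule. Equation~\eqref{I4b} follows analogously by dividing the integrated form of \eqref{I3} by $t$ (legitimate since $v_k(t-2)-2(-1)^k$ vanishes at $t=0$) and integrating once more, now invoking Equation~\eqref{S2} together with a cousin numerical identity for the new constant term. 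The main obstacle is the auxiliary identity $(\ast)$: its algebraic form is opaque, and the clean path to it opens only after translation to generating functions and exploitation of the Catalan identities $zC^2=C-1$ and $C(B+1)=2B$; by contrast, the steps from \eqref{I3} to \eqref{I4} and \eqref{I4b} are differentiation/integration bookkeeping plus routine numerical checks.
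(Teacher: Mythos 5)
Your argument is correct, and it is a genuine (though closely related) variant of the paper's. Both proofs of \eqref{I3} rest on the same recurrence $\Delta_n(s(n))=(n+1)s(n+1)-2(2n+1)s(n)$ from Equation~\eqref{RE} and on the expansion $\sum_{m\ge k}\binom{2m}{m-k}z^m=z^kB(z)C(z)^{2k}$; the difference is where the recurrence is discharged. The paper works at the level of generating functions, showing that $\bigl(A_1(z)\sqrt{1-4z}\bigr)'$ and $\bigl(B_1(z)\sqrt{1-4z}\bigr)'$ both equal $1/\bigl((1-tz)\sqrt{1-4z}\bigr)$, the latter by the chain rule applied to $U_1(h(z))$ with $h(z)=zC(z)^2=C(z)-1$. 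You instead verify $\Delta_n$ termwise on the right-hand side, which isolates the auxiliary identity $\sum_{k=1}^m k\binom{2m}{m-k}u_k(t-2)=mt^{m-1}$; this is precisely the coefficient-level form of the paper's generating-function computation, and your proof of it via the Catalan algebra $zC^2=C-1$, $C(B+1)=2B$ is sound (the reductions $(1+X)^2-tX=X(1-tz)/z$, $1-X^2=(2-C)C$ and $B(2-C)=C$ all check out, as does the boundary term $k=n+1$ in the binomial ratio). The payoff of your packaging is an explicitly stated, independently checkable binomial identity; the cost is the extra bookkeeping with $\binom{2n+2}{n+1-k}/\binom{2n}{n-k}$. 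One point you handle better than the paper: passing from the integrated form of \eqref{I3} to the stated \eqref{I4} (and likewise to \eqref{I4b}) genuinely requires the constant-term identities $2\sum_{k=1}^n(-1)^{k-1}\binom{2n}{n-k}/k^2=\binom{2n}{n}\sum_{k=1}^n 1/k^2$ and its weight-three analogue, which the paper silently absorbs; you flag them and correctly assert they are provable by induction --- indeed the same operator $\Delta_n$ does the job, since $(n+1)S_{n+1}-2(2n+1)S_n$ collapses to $\frac{1}{n+1}\sum_{k=1}^{n+1}(-1)^{k-1}\binom{2n+2}{n+1-k}=\frac{1}{2(n+1)}\binom{2n+2}{n+1}$, matching the recurrence satisfied by $\frac{1}{2}\binom{2n}{n}H_n(2)$.
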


Our proof of Equation~\eqref{I3} involves a transformation of sequences given by
\[
\{c(n)\}_{n\geq 1}\to \{s(n)\}_{n\geq 0},
\quad\mbox{where}\quad
s(n)=\binom{2n}{n}\sum_{k=1}^n \frac{c(k)}{\binom{2k}{k}},
\]
which we read as $s(0)=0$ for $n=0$.
More generally, in the sequel we interpret a sum to vanish when the upper summation limit is one less than the lower summation limit.
The resulting sequence $s(n)$ is related to the original sequence $c(n)$ by the recurrence
\begin{equation}\label{RE}
s(0)=0,\quad \Delta_n(s(n)):=(n+1)\,s(n+1)-2(2n+1)\,s(n)=(n+1)\,c(n+1).
\end{equation}

\begin{proof}[Proof of Theorem~\ref{thm:recurrences}]
We start with proving Equation~\eqref{I3}.
Consider the sequence
\[
a_d(n)=\binom{2n}{n}\sum_{k=1}^n \frac{t^{k-1}}{k^d \binom{2k}{k}},
\]
and the corresponding generating function $A_d(z)=\sum_{n\geq 0}a_d(n)z^n$.
When $d=1$ we have
\begin{align*}
\bigl(A_1(z)\sqrt{1-4z}\bigr)'
&=\sum_{n=0}^{\infty}\left(na_1(n)z^{n-1}\sqrt{1-4z}-{2a_1(n)z^n\over \sqrt{1-4z}}\right)\\
&={1\over \sqrt{1-4z}}\sum_{n=0}^{\infty}\left(na_1(n)z^{n-1}-4na_1(n)z^{n}-2a_1(n)z^n\right)\\
&={1\over \sqrt{1-4z}}\biggl(\sum_{n=1}^{\infty}na_1(n)z^{n-1}-2\sum_{n=0}^{\infty}\left(2n+1\right)a_1(n)z^n\biggr)\\
&={1\over \sqrt{1-4z}}\sum_{n=0}^{\infty}\bigl((n+1)a_1(n+1)-2(2n+1)a_1(n)\bigr)z^n.
\end{align*}
According to Equation~\eqref{RE} we have
\[
\Delta_n\bigl(a_1(n)\bigr)=(n+1)a_1(n+1)-2(2n+1)a_1(n)=
t^n\quad\mbox{for $n\geq 0$},
\]
and so
\[
\bigl(A_1(z)\sqrt{1-4z}\bigr)'={1\over \sqrt{1-4z}}\left(\sum_{n=0}^{\infty}(tz)^n\right)=\frac{1}{(1-tz)\sqrt{1-4z}}.
\]
Now consider the sequence
\[
b_1(n)=\sum_{k=1}^{\infty} \binom{2n}{n+k}\frac{u_{k}(t-2)}{k}.
\]
Its generating function $B_1(z)=\sum_{n\geq 0}b_1(n)z^n$ is
\begin{align*}
B_1(z)
&=\sum_{k=1}^{\infty} \frac{u_{k}(t-2)}{k}\sum_{n\geq 1}\binom{2n}{n+k}z^n\\
&=\sum_{k=1}^{\infty} \frac{u_{k}(t-2)}{k}\left(\frac{4z}{(1+\sqrt{1-4z})^2}\right)^k\frac{1}{\sqrt{1-4z}}
=\frac{1}{\sqrt{1-4z}}\, U_1(h(z)),
\end{align*}
where
\[
h(z)=\frac{4z}{(1+\sqrt{1-4z})^2},
\quad\mbox{and}\quad
U_d(z)=\sum_{k=1}^{\infty} \frac{u_{k}(t-2)z^k}{k^d}.
\]
Because $z(d/dz)U_1=U$, we deduce that
\[
\bigl(B_1(z)\sqrt{1-4z}\bigr)'
=\frac{d}{dz}U_1\bigl(h(z)\bigr)
=\frac{h'(z)}{h(z)}\;U\bigl(h(z)\bigr)
=\frac{1}{(1-tz)\sqrt{1-4z}}.
\]
Finally, $A_1(0)=B_1(0)$ and $\bigl(A_1(z)\sqrt{1-4z}\bigr)'=\bigl(B_1(z)\sqrt{1-4z}\bigr)'$
imply that $A_1(z)=B_1(z)$, and we conclude that Equation~\eqref{I3} holds.

To prove Equation~\eqref{I4}, integrate Equation~\eqref{I3} with respect to $t$ and then use Equation~\eqref{S1}, to obtain
\begin{align*}
\binom{2n}{n}\sum_{k=1}^n \frac{t^k}{k^2\binom{2k}{k}}
&=\sum_{k=1}^{n} \binom{2n}{n-k}\frac{v_{k}(t-2)-2(-1)^k}{k^2}\\
&=\sum_{k=1}^{n} \binom{2n}{n-k}\frac{v_{k}(t-2)}{k^2}+\binom{2n}{n}\sum_{k=1}^n \frac{1}{k^2}.
\end{align*}

One can prove Equation~\eqref{I4b} in a similar way,
by integrating Equation~\eqref{I4} divided by $t$ and then using Equation~\eqref{S2}.
\end{proof}

Equation~\eqref{I5} in the following result shows how the study of
$\sum_{k=1}^n \binom{2k}{k}^{-1}t^k$
can be reduced to the sums considered in Theorem~\ref{thm:recurrences}.
Equation~\eqref{I6} gives a similar formula for
$\sum_{k=1}^n H_{k-1}(s)\binom{2k}{k}^{-1}t^k$ with $s>0$.
Note that Equation~\eqref{I6} would not specialize correctly to the case $s=0$, where $H_{k-1}(0)=k-1$,
which instead may be obtained from Equation~\eqref{I4} by differentiation if one wishes.

\begin{thm}\label{thm:identities}
For any $n,s\geq 1$ we have the polynomial identities
\begin{align}
\label{I5}
(t-4)\sum_{k=1}^n \frac{t^{k-1}}{\binom{2k}{k}}+
2\sum_{k=1}^n \frac{t^{k-1}}{k\binom{2k}{k}}
&=
\frac{t^n}{\binom{2n}{n}}-1,\\
\label{I6}
(t-4)\sum_{k=1}^n \frac{t^{k-1}H_{k-1}(s)}{\binom{2k}{k}}+
2\sum_{k=1}^n \frac{t^{k-1}H_{k-1}(s)}{k\binom{2k}{k}}
&=
\frac{t^nH_n(s)}{\binom{2n}{n}}-\sum_{k=1}^n \frac{t^{k}}{k^s\binom{2k}{k}}.
\end{align}
\end{thm}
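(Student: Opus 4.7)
The plan is to prove both identities by induction on $n$, each one reducing to a routine telescoping check in which left- and right-hand sides are shown to satisfy the same first-order recurrence with the same initial value.

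For Equation~\eqref{I5}, I would write $g(n)$ for the left-hand side and $F(n)=t^n/\binom{2n}{n}-1$ for the right-hand side. Both vanish at $n=0$, so it suffices to verify $g(n)-g(n-1)=F(n)-F(n-1)$ for $n\ge 1$. Directly from the definition,
$$g(n)-g(n-1)=\frac{t^{n-1}}{\binom{2n}{n}}\Bigl(t-4+\frac{2}{n}\Bigr),$$
while the elementary identity $\binom{2n-2}{n-1}=n\binom{2n}{n}/\bigl(2(2n-1)\bigr)$ rewrites
$$F(n)-F(n-1)=\frac{t^n}{\binom{2n}{n}}-\frac{t^{n-1}}{\binom{2n-2}{n-1}}=\frac{t^{n-1}\bigl(tn-2(2n-1)\bigr)}{n\binom{2n}{n}},$$
which is the same expression.

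For Equation~\eqref{I6}, the strategy is identical: let $g_s(n)$ denote the left-hand side and $F_s(n)=t^nH_n(s)/\binom{2n}{n}-\sum_{k=1}^n t^k/\bigl(k^s\binom{2k}{k}\bigr)$ the right-hand side. The increment $g_s(n)-g_s(n-1)$ is the previous expression multiplied by $H_{n-1}(s)$. To compute $F_s(n)-F_s(n-1)$ I would split $H_n(s)=H_{n-1}(s)+1/n^s$: the term arising from $1/n^s$ precisely cancels the new summand $t^n/\bigl(n^s\binom{2n}{n}\bigr)$ coming from the subtracted sum, and what remains has the same shape as in the first identity, now carrying the extra factor $H_{n-1}(s)$. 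The base case is trivial since $H_0(s)=0$ makes $g_s(0)=F_s(0)=0$ and in fact $g_s(1)=F_s(1)=0$ as well.

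There is no genuine obstacle: the whole argument rests on the single combinatorial relation $\binom{2n}{n}/\binom{2n-2}{n-1}=2(2n-1)/n$ together with telescoping. The main conceptual point is simply to recognise that the right-hand sides of \eqref{I5} and \eqref{I6} have been engineered precisely so that their forward differences coincide with those of the corresponding left-hand sides; once this is observed, the induction writes itself and no integration or generating-function machinery (as was required in Theorem~\ref{thm:recurrences}) is needed here.
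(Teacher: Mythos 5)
Your proof is correct, and it is essentially the same argument as the paper's: the paper also verifies that both sides satisfy a common first-order recurrence with the same initial value, phrased via the operator $\Delta_n(s(n))=(n+1)s(n+1)-2(2n+1)s(n)$ applied to the $\binom{2n}{n}$-normalized sums, where the key fact $\Delta_n\bigl(\tbinom{2n}{n}\bigr)=0$ is exactly your relation $\binom{2n}{n}/\binom{2n-2}{n-1}=2(2n-1)/n$. Your telescoping is just the unnormalized form of that identical computation, including the same cancellation of the $1/n^s$ term against the new summand in \eqref{I6}.
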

\begin{proof}
With the same notation as in the proof of Theorem~\ref{thm:recurrences}, Equation~\eqref{RE} implies
\[
\Delta_n\bigl((t-4)a_0(n)+2a_1(n)\bigr)=(t-4)(n+1)t^n+2t^n=
\Delta_n(t^n)=\Delta_n\left(t^n-\binom{2n}{n}\right).
\]
Because the two sequences agree on $n=0$, Equation~\eqref{I5} follows.

To prove Equation~\eqref{I6}, consider
\[
a^{(s)}_d(n)=\binom{2n}{n}\sum_{k=1}^n \frac{t^{k-1}H_{k-1}(s)}{k^d \binom{2k}{k}}.
\]
Equation~\eqref{RE} yields, for $n\geq 0$,
\[
\Delta_n(a^{(s)}_d(n))=\frac{t^nH_n(s)}{(n+1)^{d-1}}.
\]
This implies
\begin{align*}
\Delta_n\left((t-4)a^{(s)}_0(n)+2a^{(s)}_1(n)\right)
&=(t-4)(n+1)t^nH_n(s)+2t^nH_n(s)\\
&=\Delta_n \bigl(t^nH_n(s)\bigr)-\frac{t^{n+1}}{(n+1)^{s-1}} \\
&=\Delta_n \bigl(t^nH_n(s)-t\,a_s(n)\bigr).
\end{align*}
Because the two sequences agree on $n=0$, Equation~\eqref{I6} follows.
\end{proof}

We point out that trigonometric versions of our Equations~\eqref{I3}, \eqref{I4} and~\eqref{I5}, with $4\cos^2\varphi$ in place of $t$,
have recently appeared in~\cite[Equations~(1.1), (5.1) and (1.3)]{WS:08}.
The proofs given there are essentially different from ours.

%%%%%%%%%%%%%%%%%%%%%%%%%%%%%%%%%%%%%%%%%%%%%%%%%%%%%%%%%%%%%%%%%%%%%%%%%%
\section{Polynomial congruences}\label{sec:congruences}
%%%%%%%%%%%%%%%%%%%%%%%%%%%%%%%%%%%%%%%%%%%%%%%%%%%%%%%%%%%%%%%%%%%%%%%%%%

In this section we specialize the two partial sums
$\sum_{k=1}^n k^{-s}\binom{2k}{k}^{-1}t^k$, with $s=1,2$,
considered in Theorem~\ref{thm:recurrences}, by setting $n=p-1$, and study their values modulo $p^2$.
(Note that the values of those sums become $p$-integral only upon multiplication by $p$.)
Theorem~\ref{thm:congruences} also contains similar but less sharp evaluations for the corresponding sums
$\sum_{k=1}^n k^{-s}H_{k-1}(2)\binom{2k}{k}^{-1}t^k$.
As we anticipated in the first paragraph of Section~\ref{sec:identities},
the possibility of specializing these polynomial congruences to numerical congruences,
exemplified in Section~\ref{sec:ZWS},
depends on our knowledge of special values of finite polylogarithms which we have developed in Section~\ref{sec:special_values}.

\begin{thm}\label{thm:congruences}
For any prime $p>3$ we have the polynomial congruences
\begin{align}
\label{CC1}
p\sum_{k=1}^{p-1} \frac{t^{k}}{k\binom{2k}{k}}
&\equiv
\frac{tu_p(2-t)-t^p}{2}+p^2\, t\sum_{k=1}^{p-1} \frac{u_{k}(2-t)}{k^2} \pmod{p^3},\\
\label{CC2}
p\sum_{k=1}^{p-1} \frac{t^{k}}{k^2\binom{2k}{k}}
&\equiv
\frac{2-v_p(2-t)-t^p}{2p}-p^2\, \sum_{k=1}^{p-1} \frac{v_{k}(2-t)}{k^3} \pmod{p^3},
\end{align}
and also
\begin{align}
\label{CC3}
p\sum_{k=1}^{p-1} \frac{t^{k}H_{k-1}(2)}{k\binom{2k}{k}}
&\equiv
t\sum_{k=1}^{p-1} \frac{u_{k}(2-t)}{k^2} \pmod{p},\\
\label{CC4}
p\sum_{k=1}^{p-1} \frac{t^{k}H_{k-1}(2)}{k^2\binom{2k}{k}}
&\equiv
-\sum_{k=1}^{p-1} \frac{v_{k}(2-t)}{k^3} \pmod{p}.
\end{align}
\end{thm}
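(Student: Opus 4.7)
The plan is to derive all four congruences from the polynomial identities of Theorem~\ref{thm:recurrences} and Theorem~\ref{thm:identities}, specialized at $n = p-1$, by expanding the coefficient ratios $\binom{2p-2}{p-1-k}/\binom{2p-2}{p-1}$ in powers of $p$. The key technical computation is
$$\frac{p\binom{2p-2}{p-1-k}}{\binom{2p-2}{p-1}} = (p-k)\prod_{j=1}^{k-1}\frac{p-j}{p+j} \equiv (-1)^k k\bigl[1 - p(H_k + H_{k-1}) + 2p^2 H_k H_{k-1}\bigr] \pmod{p^3},$$
valid for $1 \leq k \leq p-1$, where $H_k = \sum_{j=1}^{k}1/j$. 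I would derive this by taking logarithms of the telescoping product and using $\log\bigl((1-p/j)/(1+p/j)\bigr) = -2p/j + O(p^3)$; only the leading congruence $\equiv(-1)^kk\pmod p$ is needed for \eqref{CC3} and \eqref{CC4}.

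For \eqref{CC1}, I multiply \eqref{I3} through by $pt/\binom{2p-2}{p-1}$, insert the expansion, and use the sign identity $u_k(t-2) = (-1)^{k-1}u_k(2-t)$ to obtain
\begin{align*}
p\sum_{k=1}^{p-1}\frac{t^k}{k\binom{2k}{k}} \equiv -t\sum_{k=1}^{p-1}u_k(2-t) &+ pt\sum_{k=1}^{p-1}(H_k + H_{k-1})u_k(2-t)\\
&- 2p^2 t\sum_{k=1}^{p-1}H_kH_{k-1}u_k(2-t) \pmod{p^3}.
\end{align*}
The closed form $t\sum_{k=0}^{p-1}u_k(2-t) = 1 - u_p(2-t) + u_{p-1}(2-t)$ (obtained as in the proof of Lemma~\ref{lem:integrate} from $(1-xz+z^2)\sum u_k z^k = z - u_pz^p + u_{p-1}z^{p+1}$), combined with $u_{p+1}(x) - u_{p-1}(x) = v_p(x)$ and $v_p(x) \equiv x^p \pmod p$, turns the leading term into $(tu_p(2-t) - t^p)/2$ modulo $p$; the algebraic identity $2H_kH_{k-1} = H_k^2 + H_{k-1}^2 - 1/k^2$ isolates the expected $p^2 t\sum u_k(2-t)/k^2$ contribution from the $O(p^2)$ coefficient. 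Congruence \eqref{CC2} is treated identically starting from \eqref{I4}; the extra term $\binom{2p-2}{p-1}\sum 1/k^2$ contributes only at order $p^3$ because $H_{p-1}(2) \equiv 0 \pmod p$ by Wolstenholme.

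The main obstacle will be to sharpen the leading-term match from modulo $p$ to modulo $p^3$: the harmonic-weighted remainders $\sum(H_k + H_{k-1})u_k(2-t)$ and $\sum(H_k^2 + H_{k-1}^2)u_k(2-t)$ must encode precisely the Fermat-quotient refinements of $v_p(2-t) \equiv (2-t)^p \pmod{p^2}$ and $(2-t)^p \equiv 2 - t^p \pmod{p^2}$. To carry this out cleanly I would pass to the splitting field of $z^2 - (2-t)z + 1$ over $\F_p(t)$, write $u_k = (\alpha^k - \alpha^{-k})/(\alpha - \alpha^{-1})$ so that the harmonic-weighted sums become differences of finite polylogarithms $\pounds_d(\alpha^{\pm 1})$, and apply the inversion relation~\eqref{C1b} modulo $p^2$ to effect the collapse.

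For \eqref{CC3} and \eqref{CC4} only congruences modulo $p$ are needed, but a complication is that $\binom{2k}{k}$ carries a factor of $p$ for $(p+1)/2 \leq k \leq p-1$ (by Kummer), so the auxiliary sum $\sum t^{k-1}H_{k-1}(2)/\binom{2k}{k}$ appearing in~\eqref{I6} is not $p$-integral. I start from~\eqref{I6} at $s=2$ (and an integrated analog, obtained by dividing by $t$ and integrating once, for~\eqref{CC4}), multiply by $pt$ to clear denominators, and invoke Wolstenholme to annihilate the boundary term $pt^{p-1}H_{p-1}(2)/\binom{2p-2}{p-1}$ modulo $p$. The term $pt\sum t^k/(k^2\binom{2k}{k})$ is evaluated via the already-proven \eqref{CC2}, which after the same binomial-expansion analysis yields a closed expression modulo $p$. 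The residual $p(t-4)\sum t^{k-1}H_{k-1}(2)/\binom{2k}{k}$ is handled by isolating the contribution from the range $(p+1)/2 \leq k \leq p-1$, where $p/\binom{2k}{k}$ is a $p$-adic unit explicitly computable via Kummer's analysis and the substitution $k \mapsto p - k$, and the remaining algebra matches against the desired $\sum u_k(2-t)/k^2$ and $\sum v_k(2-t)/k^3$.
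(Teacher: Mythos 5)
Your reduction of the left-hand sides to harmonic-weighted Lucas sums is correct as far as it goes (the expansion of $p\binom{2p-2}{p-1-k}/\binom{2p-2}{p-1}$ checks out), but the proof has a genuine gap precisely at the step you yourself flag as ``the main obstacle'': you never evaluate $\sum_{k<p}(H_k+H_{k-1})u_k(2-t)$ modulo $p^2$ or $\sum_{k<p}(H_k^2+H_{k-1}^2)u_k(2-t)$ modulo $p$, and your proposed mechanism for doing so is misdescribed. These sums are \emph{not} differences of finite polylogarithms: $\pounds_d(\alpha)$ has coefficients $1/k^d$, whereas your sums carry harmonic-number weights $H_k$, $H_k^2$; converting them to polylogarithmic data requires an Abel-summation/order-exchange argument producing extra rational factors in $\alpha$ (and, for the $H_k^2$ terms, double harmonic sums), none of which is supplied, and the inversion relation~\eqref{C1b} alone will not ``effect the collapse.'' A second concrete error: in your treatment of~\eqref{CC2} the boundary term is $pH_{p-1}(2)$, and Wolstenholme gives only $H_{p-1}(2)\equiv\frac{2}{3}pB_{p-3}\pmod{p^2}$, so this term contributes $\frac{2}{3}p^2B_{p-3}$ at order $p^2$, not $p^3$; it cannot simply be discarded. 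Finally, for~\eqref{CC3}--\eqref{CC4} your route through~\eqref{I6} needs an independent evaluation of $p\sum t^kH_{k-1}(2)/\binom{2k}{k}$ modulo $p$, which after the Kummer analysis becomes a half-range sum $\sum_{j\le(p-1)/2}j\binom{2j}{j}t^{-j}H_j(2)$ that is left unevaluated and is essentially as hard as the target congruence (indeed the paper derives that $d=0$ sum \emph{from}~\eqref{CC3}--\eqref{CC4} via~\eqref{I6}, so your plan runs the logic in the harder direction).

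The source of all three difficulties is the choice $n=p-1$. The paper instead specializes the identities at $n=p$: there the coefficients $\binom{2p}{k}$ for $1\le k\le p-1$ satisfy $\binom{2p}{k}\equiv 2(-1)^{k-1}p/k\pmod{p^2}$, so they already carry a factor of $p$ and a factor $1/k$, and after multiplying by $p$ the correction terms land directly on $p^2\sum u_k(2-t)/k^2$ (resp.\ $p^2\sum v_k(2-t)/k^3$) with no harmonic-weighted sums ever appearing; the leading terms $tu_p(t-2)$ and $v_p(t-2)/p$ are produced exactly by the separated $k=p$ summand. For~\eqref{CC3} and~\eqref{CC4} the paper uses a different device entirely: the closed-form identities $\sum_k\binom{n}{k}\binom{n+k-1}{k-1}(-t)^{k-1}/\binom{2k}{k}=(-1)^{n-1}u_n(t-2)/2$ and its companion for $v_n$, whose coefficients at $n=p$ expand as $(-1)^{k-1}\frac{p}{k}\bigl(1-p^2H_{k-1}(2)\bigr)\pmod{p^4}$, so that the $H_{k-1}(2)$-weighted sum appears automatically as the $p^3$-level correction to~\eqref{CC1} and~\eqref{CC2}. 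If you want to salvage your approach, either switch to $n=p$ or be prepared to prove the harmonic-weighted evaluations from scratch; as written, the argument does not close.
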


\begin{proof}
Setting $n=p$ in Equation~\eqref{I3} and multiplying by $pt$ we obtain
\[
p\binom{2p}{p}\sum_{k=1}^{p-1} \frac{t^{k}}{k\binom{2k}{k}}+{t^p}
=tu_{p}(t-2)+p\,t\sum_{k=1}^{p-1} \binom{2p}{k}\frac{u_{p-k}(t-2)}{p-k}.
\]
Now we use the standard congruences
$\binom{2p}{k}\equiv 2(-1)^{k-1}p/k\pmod{p^2}$,
for $k=1,\dots,p-1$, and
$\binom{2p}{p}\equiv 2-\frac{4}{3}\,p^3B_{p-3}\pmod{p^4}$.
Because $u_k(-x)=(-1)^{k-1}u_k(x)$, we deduce
\begin{align*}
2p\sum_{k=1}^{p-1} \frac{t^{k}}{k\binom{2k}{k}}+{t^p}
&\equiv tu_{p}(t-2)+2p^2t\sum_{k=1}^{p-1} \frac{(-1)^{k-1}u_{p-k}(t-2)}{k(p-k)}\nonumber&\\
&\equiv tu_{p}(2-t)+2p^2t\sum_{k=1}^{p-1}\frac{u_{k}(2-t)}{k^2}\pmod{p^3},
\end{align*}
which is equivalent to the desired Equation~\eqref{CC1}.

To pass from this to Equation~\eqref{CC3}
we need to relate the sums
$\sum_{k=1}^{p-1}k^{-1}\binom{2k}{k}^{-1}t^k$
and
$\sum_{k=1}^{p-1}k^{-1}H_{k-1}(2)\binom{2k}{k}^{-1}t^k$
via an appropriate congruence.
To this purpose
we need the former of the following identities, valid for $n\geq 1$,
which were obtained by the second author in the course of the proof of~\cite[Theorem 3.1]{Ta:10}:
\begin{align}
\label{I1}
\sum_{k=1}^n \binom{n}{k}\binom{n+k-1}{k-1}\frac{(-t)^{k-1}}{\binom{2k}{k}}
&=
\frac{(-1)^{n-1}u_n(t-2)}{2},\\
\label{I2}
\sum_{k=0}^n \binom{n}{k}\binom{n+k-1}{k}\frac{(-t)^k}{\binom{2k}{k}}
&=
\frac{(-1)^n v_n(t-2)}{2}.
\end{align}
The latter identity will be needed later to pass from Equation~\eqref{CC2} to Equation~\eqref{CC4}.
Note that the coefficient of $(-t)^{k-1}$
in the former formula, for example, may be more simply written as
$\frac{1}{2}\binom{n+k-1}{2k-1}$,
but here we need the longer form, with the factor $\binom{2k}{k}^{-1}$ in evidence.

Thus, setting $n=p$ in Equation~\eqref{I1} and separating the last summand we obtain
\[
\sum_{k=1}^{p-1}\binom{p}{k}\binom{p-1+k}{k}\frac{(-t)^k}{\binom{2k}{k}}
=-\frac{tu_p(2-t)-t^p}{2}.
\]
One easily checks that for $k=1,\dots,p-1$ we have
\begin{align*}
\frac{k}{p}\binom{p}{k}
=\binom{p-1}{k-1}
&\equiv
(-1)^{k-1}\left(1-pH_{k-1}(1)+p^2H_{k-1}(1,1)\right)
\pmod{p^3},\\
\binom{p-1+k}{k-1}
&\equiv
1+pH_{k-1}(1)+p^2H_{k-1}(1,1)
\pmod{p^3},
\end{align*}
whence
\begin{align*}
\binom{p}{k}\binom{p-1+k}{k-1}
&\equiv (-1)^{k-1}
\frac{p}{k}\left(1-p^2\left(H_{k-1}(1)^2-2H_{k-1}(1,1)\right)\right)&\\
&\equiv (-1)^{k-1}\frac{p}{k}\left(1-p^2H_{k-1}(2)\right)
\pmod{p^4}.
\end{align*}
Noting that $\binom{2k}{k}$ can be a multiple of $p$ but not of $p^2$ in the range considered, we obtain
\[
p\sum_{k=1}^{p-1}\frac{t^k}{k\binom{2k}{k}}
-p^3\sum_{k=1}^{p-1}\frac{t^k H_{k-1}(2)}{k\binom{2k}{k}}\equiv\frac{tu_p(2-t)-t^p}{2} \pmod{p^3}.
\]
Together with Equation~\eqref{CC1} this implies Equation~\eqref{CC3}.

The proofs of Equations~\eqref{CC2} and~\eqref{CC4} are similar.
Setting $n=p$ in Equation~\eqref{I4} and multiplying by $p$ we obtain
\[
p\binom{2p}{p}\sum_{k=1}^{p-1} \frac{t^{k}}{k^2\binom{2k}{k}}+\frac{t^p}{p}
=\frac{v_{p}(t-2)}{p}+p\sum_{k=1}^{p-1} \binom{2p}{k}\frac{v_{p-k}(t-2)}{(p-k)^2}+p\binom{2p}{p}H_{p-1}(2)
+\frac{1}{p}\binom{2p}{p}.
\]
Because $v_k(-x)=(-1)^{k}v_k(x)$ and
$H_{p-1}(2)\equiv 0\pmod{p}$,
we have
\begin{align*}
2p\sum_{k=1}^{p-1} \frac{t^{k}}{k^2\binom{2k}{k}}
&\equiv \frac{2-v_{p}(t-2)-t^p}{p}+2p^2\sum_{k=1}^{p-1} \frac{(-1)^{k-1}v_{p-k}(t-2)}{k(p-k)^2}\\
&\equiv \frac{2-v_p(2-t)-t^p}{p}-2p^2\, \sum_{k=1}^{p-1} \frac{v_{k}(2-t)}{k^3}
\pmod{p^3}.
\end{align*}
and hence Equation~\eqref{CC2} holds.

Setting $n=p$ in Equation~\eqref{I2}, dividing by $p$ and separating the last summand we find
\[
\frac{1}{p}\sum_{k=1}^{p-1} \binom{p}{k}\binom{p-1+k}{k}\binom{2k}{k}^{-1}(-t)^k=\frac{2-v_p(t-2)-t^p}{2p}.
\]
In the range considered for $k$ we have
\[
\frac{1}{p}\binom{p}{k}\binom{p-1+k}{k}=\frac{1}{k}\binom{p}{k}\binom{p-1+k}{k-1}
\equiv (-1)^{k-1}\frac{p}{k^2}\left(1-p^2H_{k-1}(2)\right) \pmod{p^4},
\]
and $\binom{2k}{k}$ is not a multiple of $p^2$, and hence
\[
p\sum_{k=1}^{p-1}\frac{t^k}{k^2\binom{2k}{k}}
-p^3\sum_{k=1}^{p-1}\frac{t^k H_{k-1}(2)}{k^2\binom{2k}{k}}\equiv\frac{2-v_p(t-2)-t^p}{2p} \pmod{p^3}.
\]
Together with Equation~\eqref{CC2} this implies Equation~\eqref{CC4}.
\end{proof}

Theorem~\ref{thm:congruences} has exploited only the first two of the three polynomial congruences produced in Theorem~\ref{thm:recurrences}.
The third congruence we can only use in a weakened form, obtaining the following result.

\begin{thm}\label{thm:congruences3}
For any prime $p>3$ we have the polynomial congruence
\begin{equation*}
p\sum_{k=1}^{p-1} \frac{t^k}{k^3\binom{2k}{k}}
\equiv
\frac{1-\bigl(v_p(2-t)+t^p\bigr)\binom{2p}{p}^{-1}}{p^2}-\frac{1}{p}\sum_{k=1}^{p-1} \frac{v_{k}(2-t)}{k} \pmod{p^2}.
\end{equation*}
\end{thm}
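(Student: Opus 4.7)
The plan is to specialize Equation~\eqref{I4b} to $n=p$, multiply through by $p$, and divide by $\binom{2p}{p}$, then reduce modulo $p^2$ using the standard congruences
$\binom{2p}{m}\equiv 2(-1)^{m-1}p/m\pmod{p^2}$ for $1\le m\le p-1$, $\binom{2p}{p}\equiv 2\pmod{p^3}$, together with $v_k(t-2)=(-1)^k v_k(2-t)$ and the Wolstenholme-type fact $H_{p-1}(3)\equiv 0\pmod{p}$ for $p>3$. First I would separate the $k=p$ terms on both sides of the identity. On the left this isolates $t^p/p^3$. On the right, the first sum yields $\binom{2p}{0}v_p(t-2)/p^3=v_p(t-2)/p^3$, and the double sum yields a boundary piece $\frac{2}{p^2}\sum_{j=1}^{p-1}\frac{(-1)^{p-j}v_j(t-2)}{j}$ from rows $(j,k)$ with $k=p$ and $1\le j\le p-1$. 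The harmonic sum splits as $\binom{2p}{p}H_{p-1}(3)+\binom{2p}{p}/p^3$.

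Next I would collect the $p^{-3}$ contributions: they assemble into $(v_p(t-2)-t^p+\binom{2p}{p})/p^3$, and after multiplication by $p/\binom{2p}{p}$ and use of $v_p(t-2)=-v_p(2-t)$ this becomes exactly $\bigl(1-(v_p(2-t)+t^p)\binom{2p}{p}^{-1}\bigr)/p^2$, the first term of the target. The remaining interior pieces of the right-hand side, namely $\sum_{k=1}^{p-1}\binom{2p}{p-k}v_k(t-2)/k^3$ and $2\sum_{1\le j<k\le p-1}\binom{2p}{p-k}(-1)^{k-j}v_j(t-2)/(jk^2)$, each acquire a factor $\binom{2p}{p-k}=O(p)$; after dividing by $\binom{2p}{p}$ and multiplying by $p$ they are $O(p^2)$ times a $p$-integral sum, hence negligible modulo $p^2$. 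Similarly $pH_{p-1}(3)\equiv 0\pmod{p^2}$. The surviving piece is the boundary contribution from the double sum: using $(-1)^{p-j}v_j(t-2)=-v_j(2-t)$ and the expansion $\frac{p}{\binom{2p}{p}}\equiv \frac{1}{2}\pmod{p^3}$, it transforms into $-\frac{1}{p}\sum_{j=1}^{p-1} v_j(2-t)/j$ modulo $p^2$, yielding the second term of the target.

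The main obstacle is the delicate $p$-adic bookkeeping: two pieces with a $1/p^2$ prefactor arise (one from the $k=p$ contribution of the double sum, one from the split of the harmonic term) and one must verify that exactly one of them survives to produce the $1/p$ term of the answer while the others combine into the first term. A related subtlety is that the sharper form $\binom{2p}{p}\equiv 2\pmod{p^3}$ (rather than just mod $p^2$) is genuinely needed, since the $1/p^2$ and $1/p$ prefactors amplify any error in $\binom{2p}{p}^{-1}$ by two or one power of $p$, respectively; using $\binom{2p}{p}\equiv 2\pmod{p^2}$ only would introduce spurious contributions that fail to cancel.
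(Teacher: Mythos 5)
Your proposal is correct and follows exactly the route the paper intends: it specializes Equation~\eqref{I4b} at $n=p$, separates the $k=p$ boundary terms, and applies the standard congruences for $\binom{2p}{k}$ and Wolstenholme's $\binom{2p}{p}\equiv 2\pmod{p^3}$, which is precisely the scheme of the proof of Theorem~\ref{thm:congruences} that the paper's one-line proof invokes. Your $p$-adic bookkeeping (the three $p^{-3}$ pieces assembling into the first term, the double-sum boundary piece giving the $-\frac{1}{p}\sum v_k(2-t)/k$ term, and the interior sums and $pH_{p-1}(3)$ vanishing modulo $p^2$) checks out.
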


\begin{proof}
The proof runs along similar lines as that of Theorem~\ref{thm:congruences},
but starting from Equation~\eqref{I4b}.
\end{proof}

%%%%%%%%%%%%%%%%%%%%%%%%%%%%%%%%%%%%%%%%%%%%%%%%%%%%%%%%%%%%%%%%%%%%%%%%%%
\section{Congruences with $\binom{2k}{k}$ in the numerators}\label{sec:numerators}
%%%%%%%%%%%%%%%%%%%%%%%%%%%%%%%%%%%%%%%%%%%%%%%%%%%%%%%%%%%%%%%%%%%%%%%%%%

In this section we prove polynomial identities and congruences for sums
similar to those considered in the previous sections,
but involving the central binomial coefficients $\binom{2k}{k}$ in the numerators rather than the denominators.

One can obtain polynomials identities analogous to those of Section~\ref{sec:identities}
starting from the identity
\begin{equation}\label{I7}
\sum_{k=0}^{n-1} \binom{2k}{k} t^{n-1-k}=
\sum_{k=1}^{n} \binom{2n}{n-k}u_{k}(t-2),
\end{equation}
which was proved in~\cite{ST:10}.
In fact, successive integration according to Lemma~\ref{lem:integrate}
produces the polynomial identities
\begin{align}
\label{eq:S1}
\sum_{k=0}^{n-1} \frac{\binom{2k}{k}}{n-k}\, t^{n-k}&=
\sum_{k=1}^{n} \binom{2n}{n-k}\frac{v_{k}(t-2)-2(-1)^k}{k},
\\
\label{eq:S2}
\sum_{k=0}^{n-1} \frac{\binom{2k}{k}}{(n-k)^2}\, t^{n-k}&=
\sum_{k=1}^{n} \binom{2n}{n-k}\frac{v_{k}(t-2)-2(-1)^k}{k^2}\\\nonumber
&\quad
+2\sum_{1\leq j<k\leq n} \binom{2n}{n-k}\frac{(-1)^{k-j}\bigl(v_{j}(t-2)-2(-1)^j\bigr)}{jk},
\end{align}
which are somehow analogous to the first two identities in Theorem~\ref{thm:recurrences}.
Equation~\eqref{eq:S1} will play a role in deducing Equation~\eqref{CC9} from Equation~\eqref{CC8} in our proof of Theorem~\ref{thm:last} below.

Passing now to polynomial congruences, a simple way of switching central binomial coefficients
from denominators to numerators of our sums is based on the congruence
\[
\frac{2p}{k\binom{2k}{k}}\equiv\binom{2(p-k)}{p-k}\pmod{p},
\qquad\text{for $k=1,\dots, p-1$.}
\]
Accordingly, Equations~\eqref{CC3} and~\eqref{CC4} of Theorem~\ref{sec:congruences} have equivalent formulations
\begin{align}
\label{CC5}
\sum_{k=1}^{p-1} {t^{p-k}H_{k}(2)}\binom{2k}{k}
&\equiv
-2t\sum_{k=1}^{p-1} \frac{u_{k}(2-t)}{k^2} \pmod{p},\\
\label{CC6}
\sum_{k=1}^{p-1} \frac{t^{p-k}H_{k}(2)}{k}\binom{2k}{k}
&\equiv
-2\sum_{k=1}^{p-1} \frac{v_{k}(2-t)}{k^3} \pmod{p}.
\end{align}
However, because Equations~\eqref{CC3} and~\eqref{CC4} of Theorem~\ref{sec:congruences}
are congruences modulo $p^3$, this simple trick is insufficient to turn them into equivalent congruences
with the central binomial coefficients in the numerators.
To achieve that we need to work a bit harder, as in our next result.

Evaluations of
$\sum_{k=0}^{p-1} \binom{2k}{k}t^{p-1-k}\pmod{p^2}$
and
$\sum_{k=1}^{p-1} \binom{2k}{k}k^{-1}\,t^{p-k}\pmod{p}$
were obtained in~\cite[Equation~(2.2)]{ZWS:10b}
and~\cite[Equation~(1.11)]{ST:10}, respectively,
starting from Equation~\eqref{I7} above, and the further
polynomial identity
\begin{equation}
\label{I8}
\sum_{k=1}^{n-1} \frac{\binom{2k}{k}}{k}\, t^{n-k}=
-2\sum_{d=1}^{n-1}\frac{(-1)^{d}}{d}
\sum_{k=0}^{n-d-1} \binom{2n}{k}v_{n-d-k}(t-2)
-4\sum_{d=1}^{n-1}\frac{(-1)^{d}}{d}\binom{2n-1}{n-d-1},
\end{equation}
which was also proved in~\cite{ST:10}.
The key to push those evaluations in~\cite{ZWS:10b,ST:10} to higher moduli lies in some of the functional equations for the finite polylogarithms
which we have recalled in Section~\ref{sec:polylog}.
The resulting congruences involve the Lucas sequences
\begin{align*}
u_0(x,y)&=0, &u_1(x,y)&=1, &\mbox{and}\quad u_n(x,y)&=x\,u_{n-1}(x,y)-y\,u_ {n-2}(x,y)
\quad\mbox{for $n>1$,}\\
v_0(x,y)&=2, &v_1(x,y)&=x, &\mbox{and}\quad v_n(x,y)&=x\,v_{n-1}(x,y)-y\,v_ {n-2}(x,y)
\quad\mbox{for $n>1$,}
\end{align*}
which generalize the Lucas sequences $u_n(x)=u_n(x,1)$ and $v_n(x)=v_n(x,1)$ introduced in Section~\ref{sec:identities}.
Once again, letting $\alpha$ be an element of a quadratic field extension
of the field $\Q(x,y)$ of rational functions with $\alpha^2-x\alpha+y=0$,
we have
$u_n(x)=(\alpha^n-\alpha^{-n})/(\alpha-\alpha^{-1})$
and
$v_n(x)=\alpha^n+\alpha^{-n}$.

\begin{thm}\label{thm:last}
For any prime $p>3$ we have the polynomial congruences
\begin{align}
\label{CC7}
\sum_{k=0}^{p-1} \binom{2k}{k} t^{p-1-k}
&\equiv
2u_p(t,t)-u_p(2-t)
-2p^2\sum_{k=1}^{p-1}\frac{u_k(2-t)+u_k(t,t)}{k^2} \pmod{p^3},
\\\label{CC8}
\sum_{k=1}^{p-1} \frac{\binom{2k}{k}}{k}\,t^{p-k}
&\equiv
\frac{3t^p+2-v_p(2-t)-4v_p(t,t)}{p} \pmod{p^2}.
\\\label{CC9}
\frac{1}{2}\sum_{k=1}^{p-1} \frac{\binom{2k}{k}}{k^2}\,t^{p-k}
&\equiv
\frac{v_p(2-t)+2v_p(t,t)-t^p-2}{p^2}+\sum_{k=1}^{p-1} \frac{v_{k}(2-t)}{k^2} \pmod{p}.
\end{align}
\end{thm}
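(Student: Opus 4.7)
The plan is to specialize the polynomial identities~\eqref{I7}, \eqref{eq:S1}, \eqref{eq:S2}, and~\eqref{I8} at $n=p$ and to carry out refined binomial expansions of the coefficients $\binom{2p}{m}$, parallel to the proof of Theorem~\ref{thm:congruences}. The combinatorial identity which explains the appearance of $u_p(t,t)$ and $v_p(t,t)$ is that if $\alpha,\beta$ are the roots of $z^2-(t-2)z+1=0$ (so $u_k(t-2)=(\alpha^k-\beta^k)/(\alpha-\beta)$), then $1+\alpha$ and $1+\beta$ are the roots of $z^2-tz+t=0$, whence
\[
u_p(t,t)=\sum_{k=1}^{p}\binom{p}{k}u_k(t-2)\qquad\text{and}\qquad v_p(t,t)=\sum_{k=0}^{p}\binom{p}{k}v_k(t-2).
\]

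For~\eqref{CC7}, set $n=p$ in~\eqref{I7}, peel off the $k=p$ contribution $u_p(t-2)=u_p(2-t)$ (using that $p$ is odd), and expand the remaining binomials via the Vandermonde decomposition $\binom{2p}{p-k}=2\binom{p}{k}+\sum_{j=1}^{p-k-1}\binom{p}{j}\binom{p}{k+j}$. Each summand in the residual sum carries two factors of $p$, so combining $\binom{p}{j}\equiv p(-1)^{j-1}/j\pmod{p^2}$ with the partial-fraction split $1/(j(k+j))=(1/k)(1/j-1/(k+j))$ and $H_{p-1}\equiv 0\pmod{p^2}$ yields
\[
\binom{2p}{p-k}-2\binom{p}{k}\equiv -\frac{2p^2(-1)^{k-1}H_k}{k}\pmod{p^3}.
\]
Substituting back, the leading part reassembles into $2u_p(t,t)-u_p(2-t)$ via the identity for $u_p(t,t)$ above, while comparison of the order-$p^2$ corrections reduces, after expanding $u_k(t,t)=\sum_{i\le k}\binom{k}{i}u_i(t-2)$ and matching coefficients of each $u_i(2-t)$, to the mod-$p$ combinatorial congruence
\[
\sum_{k=i}^{p-1}\frac{\binom{k-1}{i-1}}{k}\equiv(-1)^{i-1}H_{i-1}\pmod{p},\qquad 1\le i\le p-1.
\]
This in turn follows from the classical Beta-integral identity $\sum_{j=0}^{N}(-1)^{j}\binom{N}{j}/(j+a)=1/\bigl(a\binom{N+a}{a}\bigr)$ specialized to $N=p-i$, $a=i$, combined with $1/\binom{p-1}{i-1}\equiv(-1)^{i-1}(1+pH_{i-1})\pmod{p^2}$.

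For~\eqref{CC8}, specialize~\eqref{I8} to $n=p$ and perform the analogous expansions of $\binom{2p}{k}$ and $\binom{2p-1}{p-d-1}$; the main term $3t^p+2-v_p(2-t)-4v_p(t,t)$ now assembles from the companion formula for $v_p(t,t)$ by the same Vandermonde mechanism, but the precision is only $p^2$ because the double summation over $d$ in~\eqref{I8} absorbs one factor of $p$. For~\eqref{CC9}, the cleanest route is to specialize~\eqref{eq:S2} at $n=p$ and repeat the argument with $v_k$ in place of $u_k$, to precision $p$; alternatively, divide~\eqref{CC8} by $t$ and integrate in the manner of Lemma~\ref{lem:integrate}, mirroring the passage from~\eqref{I4} to~\eqref{I4b}.

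The principal obstacle is spotting and proving the auxiliary mod-$p$ congruence above: without it, the Vandermonde cleanup leaves a residual term $\sum_{k}u_k(2-t)H_k/k$ that does not visibly match the right-hand side of~\eqref{CC7}, and recognizing this residual as $\sum_{k}\bigl(u_k(2-t)+u_k(t,t)\bigr)/k^2$ modulo $p$ is the crux of the argument. All other steps are careful but routine bookkeeping in the spirit of Section~\ref{sec:congruences}.
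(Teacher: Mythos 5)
Your proposal is correct, and it reaches the stated congruences by a route that is genuinely different from the one sketched in the paper. The paper's proof channels everything through the finite-polylogarithm machinery of Section~\ref{sec:polylog}: after applying $\binom{2p}{k}\equiv(-1)^{k-1}\frac{2p}{k}\bigl(1-2pH_{k-1}(1)\bigr)\pmod{p^3}$ to the identity \eqref{I7} at $n=p$, the main terms $u_p(t,t)$ and $v_p(t,t)$ are produced by Granville's congruence \eqref{C2}, $Q_p(x)\equiv-\pounds_1(1-x)-p\pounds_2(x)$, applied at $x=1-\alpha^{\pm1}$ (this is where $(1-\alpha)^p$ enters), and the correction $\sum_k u_k(t,t)/k^2$ appears as $\pounds_2(1-\alpha^{\pm1})$. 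You instead obtain the main terms purely combinatorially, from the Vandermonde convolution $\binom{2p}{p-k}=2\binom{p}{k}+\sum_{j=1}^{p-k-1}\binom{p}{j}\binom{p}{k+j}$ together with the root-shift identity $u_p(t,t)=\sum_{k}\binom{p}{k}u_k(t-2)$ (which is correct: if $\alpha\beta=1$ and $\alpha+\beta=t-2$ then $1+\alpha,1+\beta$ have sum and product both equal to $t$), and your asymptotic $\binom{2p}{p-k}-2\binom{p}{k}\equiv-2p^2(-1)^{k-1}H_k/k\pmod{p^3}$ checks out via the partial-fraction split and $H_{p-k-1}\equiv H_k\pmod p$. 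Your auxiliary congruence $\sum_{k=i}^{p-1}\binom{k-1}{i-1}/k\equiv(-1)^{i-1}H_{i-1}\pmod p$ is also correct and provable exactly as you say (the Beta identity gives the partial sum as $\frac1p\bigl(\binom{p-1}{i-1}^{-1}-(-1)^{i-1}\bigr)$). It is worth noting that this residual-matching step is precisely Equation~\eqref{polylogMT} with $d=2$ in disguise: your residual $\sum_k H_{k-1}u_k(2-t)/k$ equals $\sum_{0<j<k<p}\alpha^{\pm k}/(jk)$ combined over the two roots, and \eqref{polylogMT} identifies this with $-\pounds_2(1-\alpha^{\pm1})$, i.e.\ with $\sum_k u_k(t,t)/k^2$; invoking that quoted lemma would have saved you the coefficient-matching argument. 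The trade-off is that your route is more self-contained and elementary for this particular theorem, while the paper's route reuses the functional equations that the rest of the paper needs anyway; for \eqref{CC9} your two suggested options (specializing \eqref{eq:S2}, or integrating \eqref{CC8}) both work, the latter being what the paper does via Lemma~\ref{lem:integrate}.
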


Note that if $\alpha$ is an element of a quadratic field extension
of the field $\Q(t)$ of rational functions with $\alpha^2-(2-t)\alpha+1=0$,
whence
$u_n(2-t)=(\alpha^n-\alpha^{-n})/(\alpha-\alpha^{-1})$
and
$v_n(2-t)=\alpha^n+\alpha^{-n}$,
then $1-\alpha$ satisfies
$(1-\alpha)^2-t(1-\alpha)+t=0$,
whence
$u_n(t,t)=\bigl((1-\alpha)^n-(1-\alpha^{-1})^n\bigr)/(\alpha^{-1}-\alpha)$
and
$v_n(t,t)=(1-\alpha)^n+(1-\alpha^{-1})^n$.
Consequently, the sum in the right-hand side of Equation~\eqref{CC7} can be expressed in terms of
$\pounds_2(\alpha^{\pm 1})$
and
$\pounds_2(1-\alpha^{\pm 1})$.
Because the proof of Theorem~\ref{thm:last} is very similar to that of
Theorem~\ref{thm:congruences}, we only outline the argument.

\begin{proof}[Sketch of proof]
The general scheme of proof is to deduce the congruences~\eqref{CC7} and~\eqref{CC8} from the identities~\eqref{I7} and~\eqref{I8}
in a similar way as we deduced the congruences~\eqref{CC1} and~\eqref{CC2} of Theorem~\ref{thm:congruences}
from the identities~\eqref{I3} and~\eqref{I4} of Theorem~\ref{thm:recurrences}.
Thus, after taking $n=p$ and separating one term of the sum we apply standard congruences for binomial coefficients,
but modulo a higher power of $p$ than those needed in the proof of Theorem~\ref{thm:congruences}, such as
\[
\binom{2p}{k}\equiv (-1)^{k-1}\frac{2p}{k}\bigl(1-2pH_{k-1}(1)\bigr)\pmod{p^3},
\qquad\text{for $k=1,\dots,p-1$.}
\]
Because $H_{p-k-1}(1)\equiv H_{k-1}(1)+1/k\pmod{p}$,
the effect of this higher precision is the appearance of new terms, such as
$p^2\sum_{k=1}^{p-1}u(2-t,1)/k^2=
p^2\bigl(\pounds_2(\alpha)-\pounds_2(\alpha^{-1})\bigr)
/(\alpha-\alpha^{-1})$,
in contrast with the proof of Theorem~\ref{thm:congruences},
which only involved $\pounds_1(\alpha)$ and $\pounds_1(\alpha^{-1})$, albeit implicitly.
It is at this place that several congruences from Section~\ref{sec:polylog} for $\pounds_1$ and $\pounds_2$
can be brought into play, at the expense of the appearance of $\pounds_2(1-\alpha)$ and $\pounds_2(1-\alpha^{-1})$
as observed above.

Finally, Equation~\eqref{CC9} can be easily deduced from Equation~\eqref{CC8}
using Equation~\eqref{S1} with $n=p$.
\end{proof}

%%%%%%%%%%%%%%%%%%%%%%%%%%%%%%%%%%%%%%%%%%%%%%%%%%%%%%%%%%%%%%%%%%%%%%%%%%
\section{Numerical congruences}\label{sec:ZWS}
%%%%%%%%%%%%%%%%%%%%%%%%%%%%%%%%%%%%%%%%%%%%%%%%%%%%%%%%%%%%%%%%%%%%%%%%%%

In this final section we illustrate how the special values of the
finite polylogarithms investigated in
Section~\ref{sec:special_values}
allow one to evaluate the polynomial congruences in
Section~\ref{sec:congruences} and~\ref{sec:numerators}
at certain special values of $t$,
thus producing explicit numerical congruences,
some of which can be found proved or conjectured in the literature.

For a given algebraic number $t\not=0$, let $\alpha$ and $\alpha^{-1}$ be the two complex roots of the polynomial $x^2-(2-t)x+1$,
whence $t=2-\alpha-\alpha^{-1}$.
Then for $k\geq 0$ we have
\[
u_k(2-t)=
\begin{cases}
\ds\frac{\alpha^k-\alpha^{-k}}{\alpha-\alpha^{-1}} &\text{if $t\not=4$},\\
(-1)^k k &\text{if $t=4$},
\end{cases}
\quad\text{and}\quad
v_k(2-t)=\alpha^k+\alpha^{-k}.
\]
Consequently, for $d\geq 1$ we have
\[
\sum_{k=1}^{p-1} \frac{u_{k}(2-t)}{k^d}=
\begin{cases}
\ds \frac{\pounds_d(\alpha)-\pounds_d(\alpha^{-1})}{\alpha-\alpha^{-1}} &\text{if $t\not=4$},\\
\pounds_{d-1}(-1) &\text{if $t=4$},
\end{cases}
\quad\mbox{and}\quad
\sum_{k=1}^{p-1} \frac{v_{k}(2-t)}{k^d}={\pounds_d(\alpha)+\pounds_d(\alpha^{-1})}.
\]
Using the special values of $\pounds_d(x)$ established in Section $3$, Theorem~\ref{thm:congruences}
allows one to compute the explicit values of the sums in Equation~\eqref{eq:MC} (modulo $p^3$ or $p$ as stated), for $d=1,2$ and various values of $t$.
Theorem~\ref{thm:identities} then allows one to obtain
analogous formulas for the case $d=0$.

Values of $t$ for which we have quoted or proved congruences for
the corresponding $\pounds_d(\alpha)$
in Section~\ref{sec:special_values} are the following,
grouped together according to $G$-orbits of $\alpha$:
\[
4;\quad 1;\quad -1/2;\quad
2,\,(1\pm i)/2;\quad
3,\,(1\pm i\sqrt{3})/3;\quad
-1,\,2\pm\sqrt{5}=\pm\phi^3.
\]

To illustrate the kind of congruences that one obtains, we give full details of the case $t=-1$, where $\alpha=\phi^2_+$ and $\alpha^{-1}=\phi^2_-$.
In this case $u_n=F_{2n}$ and $v_n=L_{2n}$, where $F_k$ and $L_k$ are respectively the $k$-th Fibonacci number and
the $k$-th Lucas number.
The evaluations modulo $p$ of $\pounds_2(\phi^2_{\pm})$ and $\pounds_3(\phi^2_{\pm})$
which we obtained in Theorem~\ref{thm:phi} yield the following list of congruences.
For comparison, to the right of each congruence we give the sum of the corresponding infinite series,
which can be computed by using Equation~\eqref{eq:asH} and its derivatives at $z=i$.
For reasons of space we omit the moduli from the congruences and specify them in the text.

For any prime $p>5$, Equations~\eqref{CC3},~\eqref{CC4} and~\eqref{I6} yield the following three congruences modulo $p$:
\begin{align*}
\ds p\sum_{k=1}^{p-1}\frac{(-1)^kH_{k-1}(2)}{k\binom{2k}{k}}
&\equiv
\frac{1}{5}\leg{p}{5}q_L^2,
&
\ds\sum_{k=1}^{\infty}\frac{(-1)^kH_{k-1}(2)}{k\binom{2k}{k}}
&=
\frac{4\sqrt{5}}{15}\log^3(\phi_+),
\\
p\ds \sum_{k=1}^{p-1}\frac{(-1)^kH_{k-1}(2)}{k^2\binom{2k}{k}}
&\equiv
\frac{4}{15}\left(\frac{1}{2}q_L^3+B_{p-3}\right),
&
\ds\sum_{k=1}^{\infty}\frac{(-1)^kH_{k-1}(2)}{k^2\binom{2k}{k}}
&=
\frac{2}{3}\log^4(\phi_+),
\\
p\ds \sum_{k=1}^{p-1}\frac{(-1)^kH_{k-1}(2)}{\binom{2k}{k}}
&\equiv
\frac{1}{5}\,q_L+\frac{2}{25}\leg{p}{5}q_L^2,
&
\ds\sum_{k=1}^{\infty}\frac{(-1)^kH_{k-1}(2)}{\binom{2k}{k}}
&=
\frac{2}{5}\log^2(\phi_+)+\frac{8\sqrt{5}}{75}\log^3(\phi_+).
\end{align*}
Equations~\eqref{CC1},~\eqref{CC2} and~\eqref{I5} yield the following three congruences modulo $p^3$:
\[
\begin{array}{lll}
\ds p\sum_{k=1}^{p-1}\frac{(-1)^k}{k\binom{2k}{k}}\equiv \frac{1-L_pF_{p}}{2}+\frac{p^2}{5}\leg{p}{5}q_L^2,
&\ds\sum_{k=1}^{\infty}\frac{(-1)^k}{k\binom{2k}{k}}=-\frac{2\sqrt{5}\log(\phi_+)}{5},\\
\ds p\sum_{k=1}^{p-1}\frac{(-1)^k}{k^2\binom{2k}{k}}\equiv \frac{1-L_p^2}{2p}+\frac{4p^2}{15}\left(\frac{1}{2}q_L^3+B_{p-3}\right),
&\ds\sum_{k=1}^{\infty}\frac{(-1)^k}{k^2\binom{2k}{k}}=-2\log^2(\phi_+),\\
\ds p\sum_{k=1}^{p-1}\frac{(-1)^k}{\binom{2k}{k}}\equiv \frac{p-L_pF_{p}}{5}+\frac{2p^2}{25}\leg{p}{5}q_L^2,
&\ds\sum_{k=1}^{\infty}\frac{(-1)^k}{\binom{2k}{k}}=-\frac{1}{5}-\frac{4\sqrt{5}}{25}\log(\phi_+).
\end{array}
\]

We refrain from listing all of the congruences produced by our results for the remaining values of $t$ listed earlier,
but we point out that, to our knowledge, three of them were already known, and several were conjectured.
The known ones were proved by Z.~W.~Sun in~\cite[Theorems~1.2 and~1.3]{ZWS:09},
namely, \cite[Equation~(1.6)]{ZWS:09} follows from our Equation~\eqref{CC4}  with $t=2$,
while~\cite[Equation~(1.12) and~(1.13)]{ZWS:09} follow from our Equations~\eqref{CC4} and~\eqref{CC5}  with $t=4$.
Furthermore, our congruences confirm some of the conjectures stated by Z.~W.~Sun in~\cite[A31]{ZWS:10}, for $p>3$:
\begin{align*}
&\ds p\sum_{k=1}^{p-1}\frac{2^k}{k\binom{2k}{k}}
\equiv
\leg{-1}{p}-1-pq_p(2)+p^2E_{p-3} \pmod{p^3},
\\
&\ds p\sum_{k=1}^{p-1}\frac{2^k}{k^2\binom{2k}{k}}
\equiv
-q_p(2)+\frac{p^2}{16}B_{p-3}  \pmod{p^3},
\\
&\ds p\sum_{k=1}^{p-1}\frac{4^k}{k^2\binom{2k}{k}}
\equiv
-4\,q_p(2)-2pq_p^2(2)+p^2B_{p-3} \pmod{p^3}.
\end{align*}

Some congruences for the case $d=3$ can be obtained from Theorem~\ref{thm:congruences3}, such as the following,
for $p>3$:
\[
p\sum_{k=1}^{p-1}\frac{4^k}{k^3\binom{2k}{k}}\equiv -4q_p(2)^2+p\left(\frac{4}{3}q_p(2)^3-\frac{1}{6}B_{p-3}\right)\pmod{p^2}.
\]
In this case, values of $t$ different from $4$ are not as easy to deal with.
One further integration using Lemma \ref{lem:integrate} allows one to obtain congruences with $d=4$ as well.
Although we have not stated a corresponding result analogous to Theorem~\ref{thm:congruences3},
we mention that one can derive the congruence, for $p>3$,
\[
p\sum_{k=1}^{p-1}\frac{4^k}{k^4\binom{2k}{k}}\equiv -\frac{4}{3}\left(2q_p(2)^3+B_{p-3}\right)\pmod{p}.
\]

Together with the special values of the finite dilogarithm computed in Section~\ref{sec:special_values},
Equation~\eqref{CC7} of Theorem~\ref{thm:last} allows us to evaluate the sum
$\sum_{k=0}^{p-1} \binom{2k}{k}t^{-k} \pmod{p^3}$
for the values of $t$ mentioned earlier.
Aside from the case $t=4$, which is trivial here because of the identity
$\sum_{k=0}^{n}\binom{2k}{k}4^{-k}
=(2n+1)\binom{2n}{n}4^{-n}$,
two more of these evaluations were already known:
the case $t=-1$ is~\cite[Theorem~1.3]{PS:10}, and
the case $t=2$ is~\cite[Theorem~1.1]{ZWS:10c}.
Our new contributions due to Equation~\eqref{CC7}, for $p>3$, are
\begin{align*}
&\ds \sum_{k=0}^{p-1}{\binom{2k}{k}}
\equiv
\leg{p}{3}-\frac{p^2}{3}\,B_{p-2}(1/3) \pmod{p^3},
\\
&\ds \sum_{k=0}^{p-1}\frac{\binom{2k}{k}}{3^k}
\equiv
\leg{p}{3}-\frac{2p^2}{9}\,B_{p-2}(1/3) \pmod{p^3},
\\
&\ds \sum_{k=1}^{p-1}(-2)^k\binom{2k}{k}
\equiv
-\frac{4p}{3}\,q_p(2)
\pmod{p^3}.
\end{align*}
In a similar way, specializing Equations~\eqref{CC8} and~\eqref{CC9} produces several
numerical congruences.
Among those we mention
\begin{align*}
&\sum_{k=1}^{p-1} \frac{(-1)^k}{k}\binom{2k}{k}
\equiv -2q_L-p\,q_L^2 \pmod{p^2},
\\
&\ds \sum_{k=1}^{p-1}\frac{\binom{2k}{k}}{k^2}
\equiv
\frac{1}{2}\leg{p}{3}\,B_{p-2}\left(\frac{1}{3}\right) \pmod{p},
\end{align*}
which hold for any prime $p>3$.

Even the irrational values of $t$ in our list give rise to nice congruences
with rational terms after combining the algebraic conjugates together.
As an example, because $t=2\mp\sqrt{5}=\phi_{\mp}^3$
corresponds to $\alpha=\pm\phi_{+}$, and because
$2\phi_{\pm}^n=L_n\pm\sqrt{5}F_n$,
Equations~\eqref{CC8} and~\eqref{CC9} yield, for $p>5$,
\begin{align*}
&\ds \sum_{k=1}^{p-1}\binom{2k}{k}\frac{(-1)^k F_{3k-\leg{p}{5}}}{k}
\equiv
\frac{1}{5}\,p\, q_L^2 \pmod{p^2},
\\
&\ds \sum_{k=1}^{p-1}\binom{2k}{k}\frac{(-1)^k L_{3k-\leg{p}{5}}}{k^2}
\equiv
0 \pmod{p}.
\end{align*}

%%%%%%%%%%%%%%%%%%%%%%%%%%%%%%%%%%%%%%%%%%%%%%%%%%%%%%%%%%%%%%%%%%%%%%%

\end{document}